\pgfplotsset{compat=1.18, width=\textwidth}
\DeclarePairedDelimiterX{\abs}[1]{\lvert}{\rvert}{#1}
\DeclarePairedDelimiterX{\norm}[1]{\lVert}{\rVert}{#1}
\DeclarePairedDelimiterX{\dotprod}[2]{\langle}{\rangle}{#1,#2}
\definecolor{wongblue}{HTML}{0072B2}
\definecolor{wongorange}{HTML}{E69F00}
\definecolor{wonggreen}{HTML}{009E73}
\definecolor{wongpurple}{HTML}{CC79A7}
\definecolor{wonglblue}{HTML}{56B4E9}
\definecolor{wongbrown}{HTML}{D55E00}
\definecolor{wongyellow}{HTML}{F0E442}
\newcommand{\R}{\mathbb{R}}
\newcommand{\bvec}[1]{\bm{#1}}
\newcommand{\bmat}[1]{\bm{#1}}
\renewcommand{\S}{\mathcal{S}}
\newcommand{\N}{\mathcal{N}}
\newcommand{\dt}{\Delta t}
\newcommand{\halfwidth}{0.47\linewidth}
\theoremstyle{definition}
\newtheorem{definition}{Definition}[section]
\newtheorem{lemma}[definition]{Lemma}
\newtheorem{theorem}[definition]{Theorem}
\newtheorem{corollary}[definition]{Corollary}
\newtheorem{remark}[definition]{Remark}
\crefname{figure}{figure}{figures}
\crefname{definition}{definition}{definitions}
\crefname{theorem}{theorem}{theorems}
\crefname{lemma}{lemma}{lemmas}
\crefname{corollary}{corollary}{corollaries}
\crefname{proposition}{proposition}{propositions}
\crefname{remark}{remark}{remarks}
\crefname{example}{example}{examples}
\newcommand{\email}[1]{\protect\href{mailto:#1}{#1}}
\author{
    Heinrich Kraus\thanks{Universität Kassel, Heinrich-Plett-Straße 40, Kassel 34132, Germany (\email{heinrich.kraus@itwm.fraunhofer.de}).}
\and
    Jörg Kuhnert\thanks{Fraunhofer ITWM, Fraunhofer-Platz 1, Kaiserslautern 67663, Germany (\email{joerg.kuhnert@itwm.fraunhofer.de}).}
\and
    Pratik Suchde\thanks{University of Luxembourg, 2, avenue de l'Université, 4365 Esch-sur-Alzette, Luxembourg (\email{pratik.suchde@gmail.com}).}
}
\title{Higher-Order Generalized Finite Differences for Variable Coefficient Diffusion Operators\thanks{Preprint}}
\begin{document}
\maketitle
\begin{abstract}
    We present a novel approach of discretizing variable coefficient diffusion operators in the context of meshfree generalized finite difference methods.
Our ansatz uses properties of derived operators and combines the discrete Laplace operator with reconstruction functions approximating the diffusion coefficient.
Provided that the reconstructions are of a sufficiently high order, we prove that the order of accuracy of the discrete Laplace operator transfers to the derived diffusion operator.
We show that the new discrete diffusion operator inherits the diagonal dominance property of the discrete Laplace operator.
Finally, we present the possibility of discretizing anisotropic diffusion operators with the help of derived operators.
Our numerical results for Poisson's equation and the heat equation show that even low-order reconstructions preserve the order of the underlying discrete Laplace operator for sufficiently smooth diffusion coefficients.
In experiments, we demonstrate the applicability of the new discrete diffusion operator to interface problems with point clouds not aligning to the interface and numerically show first-order convergence.

\end{abstract}
\section{Introduction and governing equations}
Meshfree methods grew in popularity due to their applicability to problems where classical mesh-based methods struggle due to the high computational cost of meshing and re-meshing algorithms.
One of the most popular meshfree methods is \emph{Smoothed Particle Hydrodynamics} (SPH) that has been originally developed for solving astrophysical problems, but later has been used for general fluid mechanical problems \cite{Monaghan_1992}.
However, setting boundary conditions can be challenging and the SPH method is not consistent.
In this work, we present a generalized finite difference method that stems from a generalization of the SPH method.
It expands the capabilities of SPH for free surface problems by consistent discrete differential operators from a moving least squares (MLS) finite difference ansatz \cite{Kuhnert_1999}.
This method proved itself in practice in the simulation of free surface flows with complex geometries in industrial applications \cite{MESHFREE,Michel_Seifarth_Kuhnert_Suchde_2021,Veltmaat_Mehrens_Endres_Kuhnert_Suchde_2022}.
Our eventual goal is to extend this method to allow phase change simulations in a one-fluid model where the phase change occurs across a diffuse phase change region and not at a sharp interface dividing the domain into phases \cite{Swaminathan_Voller_1993,Saucedo-Zendejo_Resendiz-Flores_2019}.
In such models, material properties can exhibit jumps or large gradients in the phase change region.
Such material properties appear in variable coefficient diffusion operators that we abbreviate by
\begin{equation} \label{eq:diffusion_operator}
    \nabla\cdot(\bmat{\kappa}\nabla) u := \nabla\cdot(\bmat{\kappa}\nabla u).
\end{equation}
In the general case the diffusion coefficient $\bmat{\kappa}$ is a matrix-valued function and the respective differential operator is also called anisotropic diffusion operator.

For the most part of the paper we will consider the case $\bmat{\kappa} = \kappa\bmat{I}$ with a scalar-valued diffusivity $\kappa$, and the diffusion operator $\nabla\cdot(\kappa\nabla)u=\nabla\cdot(\kappa\nabla u)$.
We always assume that there exist positive constants $\kappa_{\min}$, $\kappa_{\max}\in\mathbb{R}^+$ that bound the diffusivity according to $0 < \kappa_{\min} < \kappa < \kappa_{\max}$.
During the simulation of continuum mechanical problems using the Navier-Stokes equations, the diffusion operator \eqref{eq:diffusion_operator} appears multiple times.
For example, in the momentum equation $\kappa$ represents the viscosity and $u$ a velocity component, and in the energy equation $u$ is the temperature and $\kappa$ is the heat conductivity.
In pressure Poisson equations, $u$ represents the pressure and $\kappa$ the specific volume.
All of these representatives of $\kappa$ can exhibit jumps during phase change processes.
Most notably, the viscosity can jump by several orders of magnitude when modeling solid materials as extremely viscous fluids.
With this in view, it is necessary to study the case of strong discontinuities in the diffusion coefficient $\kappa$ that leads to an interface problem.
In this paper, we only consider the case of a linear diffusion operator with a diffusivity $\kappa=\kappa(\bvec{x})$ that only depends on the spatial variable $\bvec{x}$.

Classical mesh-based methods, such as the finite element method and the finite volume method, facilitate the discretization of diffusion operators as in \cref{eq:diffusion_operator} because they require the weak or integral form of partial differential equations \cite{Babuska_1970,Ewing_Li_Lin_Lin_1999}.
In contrast, the generalized finite difference method uses differential operators in their strong form with an MLS ansatz to enforce consistency conditions for the discrete operators \cite{Fan_Chu_Sarler_Li_2019,Li_Fan_2017,Suchde_2018}.
The MLS ansatz lacks stability features that are intrinsic to some mesh-based methods, such as diagonal dominance, which is essential for stability of the method and fulfillment of the discrete maximum principle for elliptic problems \cite{Seibold_2006,Chipot_2009}.
We present a one-dimensional correction technique to enforce diagonal dominance upon the discrete operators.
Furthermore, we investigate derived operators that stem from modifications applied to discrete differential operators.
This will be the foundation for the new discrete diffusion operator.

A common approach to tackle discontinuities in the diffusion coefficient is to divide the domain into subdomains on which the diffusivity is smooth.
Such domain decompositions require explicit interface conditions between the subdomains and points on the interface \cite{Xing_Song_He_Qiu_2020,Ahmad_Islam_Larsson_2020,Davydov_Safarpoor_2021,Qin_Song_Liu_2023}.
In the present work, we assume an unknown interface location rendering it impossible to perform a domain decomposition without prior interface identification.
Moreover, placing points on the interface is impossible in a diffuse interface scenario.
Yoon and Song \cite{Yoon_Song_2014}, Kim et al. \cite{Kim_Liu_Yoon_Belytschko_Lee_2007}, and Suchde and Kuhnert \cite{Suchde_Kuhnert_2019} incorporate enrichment to improve the discrete diffusion operator by adding non-differentiable functions to the consistency conditions in assumed interface regions.
The new discrete diffusion operator presented in this paper does not require a special treatment of interface points and hence knowledge of the interface location.
Moreover, the new operator inherits diagonal dominance from the discrete Laplace operator and thus yields a stable numerical scheme.

Following a brief introduction to generalized finite difference methods in \cref{sec:gfdm}, we formulate the new derived diffusion operator in \cref{sec:diffusion_operator} and study it regarding consistency and stability. We test the new discrete diffusion operator in \cref{sec:results} using the parabolic heat equation
\begin{equation} \label{eq:heat_equation_base}
    \diffp{u}{t} = \nabla\cdot(\kappa\nabla u) + Q
\end{equation}
and the elliptic Poisson's equation
\begin{equation} \label{eq:poisson_equation}
    -\nabla\cdot(\kappa\nabla u) = Q.
\end{equation}
It is possible to incorporate non-homogeneous Dirichlet and Neumann boundary conditions in the generalized finite difference method.
But in this paper, we only impose homogeneous Dirichlet boundary conditions \(u|_{\partial\Omega} = 0\) on the boundary of a domain $\Omega\subset\mathbb{R}^d$ in the two-dimensional case $d=2$.
The ideas presented in this work can be easily extended to the three-dimensional case $d=3$.

\section{Generalized finite difference method} \label{sec:gfdm}
Our formulation of the generalized finite difference method is based on the discretization of a closed domain $\Omega\subset\mathbb{R}^d$ by a point cloud \( \Omega_h = \set{\bvec{x}_1, \dots, \bvec{x}_N}\subset\Omega. \)
The indexed function $h \colon \Omega \to \R^+$ is called interaction radius or smoothing length and establishes a concept of connectivity through the discrete balls given by \( B_i = \set{\bvec{x} \in \Omega_h \mid \norm{\bvec{x} - \bvec{x}_i} \le h(\bvec{x}_i)} \) defining the stencils \(\S_i = \set{j \mid \bvec{x}_j \in B_i}. \)
Since a point with index $i$ is always a part of its stencil, $i\in\S_i$, it is convenient to define the neighbors $\N_i = \S_i\setminus\set{i}$.
Throughout the document, we use the Landau notation $O(h^p)$ without specifying the limit $h \to 0$.
This limit means that the interaction radius $h$ converges uniformly to the zero function.

\subsection{Differential operator discretization}
In generalized finite difference methods, linear differential operators $D$ are discretized in their strong form at each point $\bvec{x}_i\in\Omega_h$.
For this, we compute coefficients $c_{ij}^D$ to represent the discrete differential operator $D_i$ at $\bvec{x}_i$ by
\begin{equation} \label{eq:gfdm_formulation}
    Du(\bvec{x}_i) \approx D_i u = \sum_{j\in\S_i} c_{ij}^D u(\bvec{x}_j).
\end{equation}
For example, the coefficients $c_{ij}^\Delta$ represent the discrete Laplace operator $\Delta_i$ and the vector-valued coefficients $\bvec{c}_{ij}^\nabla\in\mathbb{R}^d$ express the discrete gradient $\nabla_i$.
The numerous ways to calculate the coefficients $c_{ij}^D$ divide generalized finite difference methods into distinct formulations, for example, RBF methods \cite{Larsson_Fornberg_2003}, RBF-FD methods \cite{Flyer_Fornberg_Bayona_Barnett_2016,Bayona_Moscoso_Carretero_Kindelan_2010,Shankar_2017}, or Voronoi-based finite volume methods (FVM) \cite{Mishev_1998}.
Milewski \cite{Milewski_2018} applies a random walk technique to obtain the coefficients and stencils, while Davydov and Safarpoor \cite{Davydov_Safarpoor_2021} select the stencils based on quality measures.
The latter two approaches generally lead to non-radial neighborhoods $B_i$.

For our MLS formulation, we define a test function set $\Phi_i$ to enforce exact reproducibility
\begin{equation} \label{eq:wlsq_reproducibility}
    D\phi(\bvec{x}_i) = \sum_{j\in\S_i} c_{ij}^D \phi(\bvec{x}_j),\quad\forall\phi\in\Phi_i.
\end{equation}
This leads to a linear system $\bmat{K}_i\bvec{c}_i^D = \bvec{b}_i^D$ where the consistency matrix $\bmat{K}_i$ contains the evaluations of the test functions at the neighboring points $\phi(\bvec{x}_j)$, $\bvec{c}_i^D$ is a collection of the respective coefficients $c_{ij}^D$, and $\bvec{b}_i^D$ is the right-hand side resulting from the evaluations of the test function derivatives $D\phi(\bvec{x}_i)$.
If the number of neighbors coincides with the number of test functions, $\abs{B_i} = \abs{\Phi_i}$, \cref{eq:wlsq_reproducibility} is solvable, provided that the resulting matrix $\bmat{K}_i$ is invertible.
But the MLS method originates from neighborhoods that have more points than there are test functions, $\abs{B_i} > \abs{\Phi_i}$.
To solve \cref{eq:wlsq_reproducibility} in this case, we impose a minimization
\begin{equation} \label{eq:wlsq_minimization}
    \min \sum_{j\in\S_i} \frac{1}{2} \left( \frac{c_{ij}^D}{w_{ij}} \right)^2
\end{equation}
with weights \( w_{ij} = w(\norm{\bvec{x}_j - \bvec{x}_i}/ h_i ) \) given by a decreasing weight function $w \colon [0, 1] \to (0, 1]$ that we set as $w(r) = \exp(-r)$.
Generally, the choice of the weight function influences the discretization error but is not a subject of this paper \cite{Jacquemin_Tomar_Agathos_Mohseni-Mofidi_Bordas_2020}.

Together with \cref{eq:wlsq_minimization}, we rewrite \cref{eq:wlsq_reproducibility} in matrix form
\begin{equation} \label{eq:wlsq_optimization}
    \min\Set{\frac{1}{2} \norm{\bmat{W}_i^{-1} \bvec{c}_i^D}_2^2 | \bmat{K}_i \bvec{c}_i^D = \bvec{b}_i^D}
\end{equation}
where $\bmat{W}_i$ is a diagonal matrix with the weights $w_{ij}$.
The optimization problem has the unique solution $\bvec{c}_i^D = \bmat{W}_i^2 \bmat{K}_i^T \bmat{A}_i^{-1} \bvec{b}_i^D$ if
\begin{equation} \label{eq:wlsq_optimization_matrix}
    \bmat{A}_i = \bmat{K}_i \bmat{W}_i^2 \bmat{K}_i^T
\end{equation}
is invertible which we will always assume in the following.
For that, it is necessary that the set of test functions $\Phi_i$ is unisolvent.
A more detailed discussion is presented by Wendland \cite{Wendland_2004}.
In our computations, it is usually sufficient that the neighboring points do not lie on a line (in 2D) or a plane (in 3D).

In the following, the multi-index notation with the conventions $\bvec{x}^{\bvec{\alpha}} = \prod_{i=1}^d x_i^{\alpha_i}$ and $\abs{\bvec{\alpha}} = \sum_{i=1}^d \alpha_i$ for $\bvec{\alpha} \in \mathbb{N}_0^d$ and $\bvec{x} \in \mathbb{R}^d$ is used.
We will further use the notations $\bvec{\alpha}! = \prod_{i=1}^d \alpha_i!$, and $\partial^{\bvec{\alpha}} = \prod_{i=1}^d \partial_i^{\alpha_i}$ to express Taylor expansions in a compact form.
Additionally, with another multi-index $\bvec{\beta}\in\mathbb{N}_0^d$, we will use the binomial coefficient $\binom{\bvec{\alpha}}{\bvec{\beta}} = \frac{\bvec{\alpha}!}{(\bvec{\alpha}-\bvec{\beta})! \bvec{\beta}!}$, and we say that $\bvec{\alpha}\le\bvec{\beta}$ if $\alpha_i \le \beta_i$ for each $i=1,\dots,d$.

We use monomials up to degree $p$
\begin{equation}
    \Phi_i = \set{\bvec{x}\mapsto(\bvec{x} - \bvec{x}_i)^{\bvec{\alpha}} | \bvec{\alpha}\in\mathbb{N}_0^d, \, \abs{\bvec{\alpha}} \le p}
\end{equation}
to enforce the consistency of the discrete differential operators.
If $\bmat{A}_i$ in \cref{eq:wlsq_optimization_matrix} is invertible, then its condition number depends on the smoothing length $h$ and tends to $\infty$ when $h \to 0$, however it becomes independent of the smoothing length for scaled monomials $(\bvec{x} - \bvec{x}_i)^{\bvec{\alpha}} / h(\bvec{x}_i)^{\abs{\bvec{\alpha}}}$ \cite{Zheng_Li_2022}.
For clarity, we will use the notation with unscaled monomials, while for the numerical investigations in \cref{sec:results} we use scaled monomials.

Monomial test functions enable us to discretize basic differential operators, such as the Laplace operator or directional derivatives.
To illustrate our formulation of the generalized finite difference method, let us consider the Laplace operator and monomial test functions.
The reproducibility conditions from \cref{eq:wlsq_reproducibility} read
\begin{equation} \label{eq:consistency_laplace}
    \sum_{j\in\S_i} c_{ij}^\Delta (\bvec{x}_j-\bvec{x}_i)^{\bvec{\alpha}} =
    \begin{cases}
        2, &\text{if } \bvec{\alpha} = 2\bvec{e}_k, \\
        0, &\text{else},
    \end{cases}
\end{equation}
where $\bvec{e}_k$ is the $k$th standard basis vector of $\mathbb{R}^d$.
According to these consistency conditions, the right-hand side $\bvec{b}_i^\Delta$ in the constraint in optimization problem \eqref{eq:wlsq_optimization} is independent of $\bvec{x}_i$.
On the other hand, by expanding the diffusion operator \eqref{eq:diffusion_operator} for a sufficiently smooth diffusivity $\kappa$
\begin{equation} \label{eq:diffusion_expanded}
    \nabla\cdot(\kappa\nabla u) = \dotprod{\nabla\kappa}{\nabla u} + \kappa\Delta u,
\end{equation}
we can derive the consistency conditions
\begin{equation} \label{eq:consistency_diffusion}
    \sum_{j\in\S_i} c_{ij}^{\nabla\cdot(\kappa\nabla)} (\bvec{x}_j-\bvec{x}_i)^{\bvec{\alpha}}
    =
    \begin{cases}
        \partial_k \kappa(\bvec{x}_i), &\text{if } \bvec{\alpha} = \bvec{e}_k, \\
        2\kappa(\bvec{x}_i), &\text{if } \bvec{\alpha} = 2\bvec{e}_k, \\
        0, &\text{else}.
    \end{cases}
\end{equation}
In contrast to $\bvec{b}_i^\Delta$, the right-hand side $\bvec{b}_i^{\nabla\cdot(\kappa\nabla)}$ depends on $\bvec{x}_i$ and needs the gradient $\nabla\kappa(\bvec{x}_i)$ or an approximation thereof for problems where the gradient cannot be computed manually.
For sufficiently smooth $\kappa$, the approximation of $\nabla\kappa$ is a reasonable approach to discretize the diffusion operator.
However, in our previous work we have shown that for discontinuous $\kappa$ this approach leads to instabilities for elliptic interface problems \cite{Kraus_Kuhnert_Meister_Suchde_2023}.
The new discrete diffusion operator presented in \cref{sec:diffusion_operator} circumvents the necessity of explicitly calculating the diffusivity gradient and the right-hand side for the optimization problem \eqref{eq:wlsq_optimization}.

Note that each discrete differential operator $D_i$ that differentiates constant functions exactly fulfills $c_{ii}^D = -\sum_{j\in\N_i} c_{ij}^D$. Because of that, the diagonal entry results from the off-diagonal entries and doesn't need to be computed.
Moreover, the application of a discrete differential operator to a function $u$ can be equivalently to \cref{eq:gfdm_formulation} written as
\begin{equation} \label{eq:gfdm_formulation_neighbors}
    D_i u = \sum_{j\in\N_i} c_{ij}^D (u(\bvec{x}_j) - u(\bvec{x}_i)).
\end{equation}

\subsection{Accuracy} \label{sec:accuracy}
In the previous section, we have derived an MLS approach to discretizing basic differential operators of degree $m$ of the form
\begin{equation} \label{eq:accuracy_operator}
    D = \sum_{\abs{\bvec{\alpha}}=0}^m a_{\bvec{\alpha}}\partial^{\bvec{\alpha}}
\end{equation}
with constant coefficients $a_{\bvec{\alpha}}\in\mathbb{R}$, and $a_{\bvec{\alpha}}\ne 0$ for some $\abs{\bvec{\alpha}}=m$ by using exact reproducibility of monomial test functions.
In this case, the coefficients for the respective discrete differential operator $D_i$ behave as $c_{ij}^D = \bigcap_{k=0}^m O(h^{-k})=O(h^{-m})$ for $h\to0$ due to the monomial consistency conditions.
This enables us to formulate the local discretization error for a sufficiently smooth function $u$.
A Taylor expansion of $u$ at each neighbor $\bvec{x}_j$ for $j\in\S_i$ yields
\begin{equation} \label{eq:accuracy_taylor}
    D_i u = \sum_{\abs{\bvec{\alpha}}=0}^p \frac{\partial^{\bvec{\alpha}}u(\bvec{x}_i)}{\bvec{\alpha}!} \sum_{j\in\S_i} c_{ij}^D (\bvec{x}_j - \bvec{x}_i)^{\bvec{\alpha}} + \sum_{j\in\S_i} c_{ij}^D O(h^{p+1}).
\end{equation}
Applying the consistency conditions of the differential operator \eqref{eq:accuracy_operator} to the inner sum in \cref{eq:accuracy_taylor} and the behavior of the coefficients in the limit $h\to0$ to the last sum, we obtain the local discretization error
\begin{equation} \label{eq:consistency_error}
    \tau_i = Du(\bvec{x}_i) - \sum_{j\in\S_i} c_{ij}^D u(\bvec{x}_j) = O(h^{p-m+1})
\end{equation}
for $h\to0$.
This means that the discrete differential operator $D_i$ is consistent if the degree of monomials is at least equal to the order of the differential operator $p\ge m$.
Hence, using second-degree monomials yields a second-order discrete gradient and a first-order discrete Laplace operator.
Using first-degree monomials to calculate the discrete Laplacian does not result in a consistent discretization.
This can also be seen in the consistency conditions in \cref{eq:consistency_laplace} because the principal nonzero terms on the right-hand side only appear for second-degree monomials.

\begin{remark} \label{rem:approximate_consistency_conditions}
    If the consistency conditions are not exactly reproduced, that is $\bmat{K}_i\bvec{c}_i^D = \bvec{b}_i^D + O(h^q)$ for some $q\in\mathbb{N}$, then the additional error term directly transfers to the consistency error in \cref{eq:consistency_error}.
    This can be easily verified by using the approximated consistency conditions in the Taylor expansion \eqref{eq:accuracy_taylor} which yields $\tau_i = O(h^{p-m+1}) + O(h^q)$.
\end{remark}

\subsection{Stability} \label{sec:stability}
So far we have only considered consistency conditions for deriving discrete differential operators.
However, it is well-known that a consistent numerical scheme is convergent if and only if it is stable.
The notion of stability usually depends on the mathematical model and gives sufficient or necessary stability conditions to the numerical scheme.
In this section we give a brief outline of stability with respect to solving Poisson's equation while the study of the semi-discretization of the heat equation yields similar conditions.
A discretization of Poisson's equation \eqref{eq:poisson_equation} with homogeneous Dirichlet boundary conditions results in a linear system $\bmat{A}\bvec{u} = \bvec{b}$ with a sparse matrix $\bmat{A}\in\mathbb{R}^{N \times N}$, the right-hand side resulting from the source term $\bvec{b}\in\mathbb{R}^N$ and a solution vector $\bvec{u}\in\mathbb{R}^N$.
In $\bmat{A}$, rows corresponding to interior points consist of the coefficients obtained from discretizing the diffusion operator $a_{ij} = -c_{ij}^{\nabla\cdot(\kappa\nabla)}$ for $j\in\S_i$, and all other rows belong to Dirichlet boundary points with $a_{ij}=\delta_{ij}$ with the Kronecker delta symbol $\delta_{ij}$.
Let us consider the vectorized analytical solution $\bvec{u} = (u(\bvec{x}_1), \dots, u(\bvec{x}_N))^T$ and a solution of the discretized Poisson's equation $\bvec{u}_h = (u_1, \dots, u_N)^T$ and define the global error by $\bvec{e}=\bvec{u}-\bvec{u}_h$.
Thus, the numerical scheme is convergent if $\norm{\bvec{e}}\to0$ holds in some norm $\norm{\cdot}$ as $h\to0$.
The global error can also be expressed with the local discretization errors as $\bmat{A}\bvec{e}=\bvec{\tau}$ with $\bvec{\tau} = (\tau_1,\dots,\tau_N)^T$.
We finally obtain $\norm{\bvec{e}} \le \norm{\bmat{A}^{-1}}\cdot\norm{\bvec{\tau}}$ if $\bmat{A}$ is invertible.
Since a consistent numerical scheme fulfills $\norm{\bvec{\tau}}\to0$ for $h\to0$, a sufficient condition for the stability is the boundedness of $\norm{\bmat{A}^{-1}}$ independent of the discretization size $h$ and thus number of points $N$.

In the present generalized finite difference method, the sole information about the system matrix $\bmat{A}$ are row properties due to the consistency conditions of the discrete diffusion operator.
Because of that, it is reasonable to look for a bound of $\bmat{A}$ in the matrix norm induced by the maximum norm
\begin{equation}
    \norm{\bmat{A}}_\infty = \max_{i=1,\dots,N} \sum_{j\in\S_i} \abs{a_{ij}}.
\end{equation}
Collatz \cite{Collatz_1952}, and Bramble and Hubbard \cite{Bramble_Hubbard_1962} found in their studies that the system matrix resulting from a discretization of Poisson's equation is bounded in the maximum norm if it is monotone, that means the inverse only consists of non-negative entries $\bmat{A}^{-1}\ge0$.
A sufficient condition for the monotony is diagonal dominance of the matrix $\bmat{A}$ with positive diagonal entries $a_{ii} > 0$ and non-positive off-diagonal entries $a_{ij} \le 0$ leading to an M-matrix \cite{Seibold_2006}.
Lorenz \cite{Lorenz_1977} presented a condition for a matrix being a product of finitely many M-matrices by relaxing the diagonal dominance condition, and allowing small positive off-diagonal entries compared to the diagonal entry.
Diagonal dominance is also crucial for solving the parabolic heat equation, where the lack of diagonal dominance can lead to severe instabilities \cite{Suchde_2018}.

Monotone matrices do not only guarantee stability of the scheme, but also fulfill discrete maximum principles.
The weak maximum principle for Poisson's equation with homogeneous boundary conditions states that if the source term is non-positive $Q\le 0$, then the analytical solution attains its maximum at the boundary such that in our case $u\le 0$ is fulfilled.
Similarly, the right-hand side of the discrete problem satisfies $\bvec{b}\le 0$ component-wise for $Q\le 0$, and if $\bmat{A}$ is monotone then the solution of the linear system satisfies $\bvec{u}_h = \bmat{A}^{-1}\bvec{b} \le 0$.
This concept can also be extended to non-homogeneous Dirichlet boundary conditions but is not within the scope of this paper.
A more detailed discussion is provided by Ciarlet \cite{Ciarlet_1970} in the context of the finite difference method, but the statements apply similarly to the generalized finite difference method.

\subsection{Diagonal dominance} \label{sec:diagonal_dominance}
Since diagonally dominant operators pose a sufficient condition to the stability of the numerical scheme, it is necessary to construct diagonally dominant discrete operators.
Motivated by the findings from the previous section, we define diagonally dominant discrete operators as follows.
\begin{definition} \label{def:diagonal_dominance}
    A discrete differential operator $D_i$ is called diagonally dominant if it is consistent for constant functions, and its coefficients $c_{ij}^D$ satisfy the sign condition $c_{ij}^D c_{ii}^D \le 0$ for all $j \in\N_i$.
\end{definition}
Because the optimization problem \eqref{eq:wlsq_optimization} does not enforce diagonal dominance, a correction technique was proposed by Suchde \cite{Suchde_2018}.
For the correction, we use a second discrete operator $\bvec{c}_i^0\in\ker{\bmat{K}_i}$ approximating the zero functional $f \mapsto 0$ non-trivially by prescribing $c_{ii}^0 = 1$ in the optimization.
The corrected operator $\widehat{\bvec{c}}_i^D = \bvec{c}_i^D + \widehat{\alpha} \bvec{c}_i^0
$ thus satisfies the same consistency conditions as $\bvec{c}_i^D$ and hence does not reduce the consistency and convergence order.
One way to obtain the coefficient $\widehat{\alpha}$ is to formulate a minimization problem that penalizes large off-diagonal entries relative to the diagonal entry
\begin{equation} \label{eq:alpha_minimization}
    \widehat{\alpha} = \operatorname*{arg\,min}_{\alpha\in\mathbb{R}} \sum_{j\in\S_i} \frac{(c_{ij}^D + \alpha c_{ij}^0)^2}{(c_{ii}^D + \alpha c_{ii}^0)^2}.
\end{equation}
The advantage of this minimization approach is its unique solvability and the low computational cost. But a disadvantage is that the sign condition from \cref{def:diagonal_dominance} is not necessarily fulfilled.
In our previous work \cite{Kraus_Kuhnert_Meister_Suchde_2023}, we have demonstrated the applicability of this correction technique for obtaining diagonally dominant discrete operators.
However, we have also exemplarily demonstrated that stencils of sufficiently high quality are required without specifying a rigorous definition of point cloud quality.
We could also show that diagonally dominant operators could only be calculated with this method in the case of a sufficiently smooth diffusion coefficient, and no diagonally dominant operators were obtained in the case of a discontinuous diffusion coefficient.

\subsection{Derived operators} \label{sec:derived_operators}
In this section, we lay the foundation for the discretization of the variable coefficient diffusion operator in \cref{sec:diffusion_operator} with the use of so-called \emph{derived operators}.
By derived operators we understand discrete operators that stem from weighting coefficients of another discrete operator.
Let $c_{ij}^D$ be the coefficients of a discrete differential operator approximating $D$, and $v_j\in\mathbb{R}$ some weights, then the derived operator reads $c_{ij}^{\widetilde{D}} = v_j c_{ij}^{D}$ and discretizes another operator $\widetilde{D}$.
If the weights $v_j$ stem from discrete evaluations of a function at neighboring points, that means $v_j = v(\bvec{x}_j)$, then derived operators can be studied using the product rule.
Let $u$, $v\colon\Omega\to\mathbb{R}$ be two sufficiently smooth functions, then by the Leibniz-rule
\begin{equation} \label{eq:leibniz-rule}
    \partial^{\bvec{\alpha}} (uv) = \sum_{\bvec{\beta}\le\bvec{\alpha}} \binom{\bvec{\alpha}}{\bvec{\beta}} \partial^{\bvec{\alpha}-\bvec{\beta}} u \, \partial^{\bvec{\beta}} v
\end{equation}
holds for any multi-index $\bvec{\alpha}\in\mathbb{N}_0^d$.
For now, let $v(\bvec{x}) = (\bvec{x}-\bvec{x}_i)^{\bvec{\gamma}}$ be a monomial with a multi-index $\bvec{\gamma}\in\mathbb{N}_0^d$ and $\bvec{\gamma}\le\bvec{\alpha}$.

Applying the Leibniz-rule \eqref{eq:leibniz-rule} and evaluating the derivatives at $\bvec{x}=\bvec{x}_i$ yields
\begin{equation} \label{eq:leibniz-rule-applied}
    \partial^{\bvec{\alpha}} (uv) |_i = \frac{\bvec{\alpha}!}{(\bvec{\alpha}-\bvec{\gamma})!} \partial^{\bvec{\alpha}-\bvec{\gamma}} u(\bvec{x}_i)
\end{equation}
where we have used the property $\partial^{\bvec{\gamma}} (\bvec{x}-\bvec{x}_i)^{\bvec{\gamma}} = \bvec{\gamma}!$.
Let $D_i$ be a discrete operator that discretizes $D=\partial^{\bvec{\alpha}}$ and has a consistency error of $O(h^p)$ for some $p\in\mathbb{N}$.
Discretizing the term on the left-hand side of \cref{eq:leibniz-rule-applied} yields
\begin{equation} \label{eq:derived-operator-applied}
    \partial^{\bvec{\alpha}} (uv) |_i = \sum_{j\in\S_i} c_{ij}^D (\bvec{x}_j-\bvec{x}_i)^{\bvec{\gamma}} u(\bvec{x}_j) + O(h^p).
\end{equation}
Combining \cref{eq:leibniz-rule-applied,eq:derived-operator-applied}, we can see that the coefficients $c_{ij}^{\widetilde{D}} = c_{ij}^D (\bvec{x}_j - \bvec{x}_i)^{\bvec{\gamma}}$ pose a consistent discretization of $\widetilde{D} = \frac{\bvec{\alpha}!}{(\bvec{\alpha}-\bvec{\gamma})!} \partial^{\bvec{\alpha}-\bvec{\gamma}}$ with the same consistency order $p$.
Recall from \cref{sec:accuracy} that the consistency order depends on both the degree of monomials and the degree of the differential operator.
Hence, by enforcing the same consistency conditions to calculate a discrete differential operator of $\widetilde{D}$, the MLS ansatz could produce an approximation of higher order in the case of $\max\bvec{\alpha}-\bvec{\gamma} < \max\bvec{\alpha}$.
In the case of $\bvec{\gamma}>\bvec{\alpha}$, the resulting derived operator poses a discretization of the zero functional of order $p$.

We consider operators that can be derived from the discrete Laplace operator to define the new discrete diffusion operator in the following section.
Hence, we assume that the discrete Laplace operator satisfies
\begin{equation} \label{eq:laplace_accuracy}
    \Delta u(\bvec{x}_i) = \sum_{j\in\S_i} c_{ij}^\Delta u(\bvec{x}_j) + O(h^p)
\end{equation}
for some $p\in\mathbb{N}$.
In the following corollary we summarize all discrete operators that can be derived from the discrete Laplace operator by weighting with monomials.

\begin{corollary} \label{thm:derived_from_laplace}
    Let $u\colon\Omega\to\mathbb{R}$ be a sufficiently smooth function, and the coefficients $c_{ij}^\Delta$ fulfill \cref{eq:laplace_accuracy} then
    \begin{equation*}
        \sum_{j\in\S_i} c_{ij}^\Delta (\bvec{x}_j - \bvec{x}_i)^{\bvec{\alpha}} u(\bvec{x}_j) =
        \left.
        \begin{cases}
            \Delta u(\bvec{x}_i),        &\text{if } \bvec{\alpha} =  \bvec{0}, \\
            2\partial_{k} u(\bvec{x}_i), &\text{if } \bvec{\alpha} =  \bvec{e}_k, \\
            2u(\bvec{x}_i),              &\text{if } \bvec{\alpha} = 2\bvec{e}_k, \\
            0,                           &\text{else},
        \end{cases}
        \right\}
        + O(h^p)
    \end{equation*}
    holds for all $\bvec{\alpha}\in\mathbb{N}_0^d$.
\end{corollary}

\Cref{thm:derived_from_laplace} states that the coefficients $c_{ij}^\Delta(\bvec{x}_j-\bvec{x}_i)/2\in\mathbb{R}^d$ depict a derived gradient operator of order $p$.
Moreover, we obtain derived interpolation operators with $c_{ij}^\Delta(\bvec{x}_j - \bvec{x}_i)^{2\bvec{e}_k}/2$ for each $k\in\set{1,\dots,d}$ that can be used to interpolate function values at new nodes.
In this form, the derived interpolation operators only consider the variation of the nodes in the $k$th coordinate direction.
Arithmetic averaging of all derived interpolation operators yields an interpolation operator that is independent of the coordinate direction.
Note that all derived operators obtained from multiplication with a monomial with $\bvec{\alpha} \ne \bvec{0}$ have vanishing diagonal entries.

\section{Derived diffusion operator} \label{sec:diffusion_operator}
The new discrete diffusion operator is inspired by the Voronoi-based finite volume method, generalizing it to arbitrary neighborhoods that are not based on a mesh. Trask et al. \cite{Trask_Perego_Bochev_2017} presented a similar idea that requires a globally computed graph connecting the point cloud. Seifarth \cite{Seifarth_2018} also successfully applied ideas from classical finite volume methods to generalized finite difference methods to solve transport equations on static point clouds. Kwan-Yu et al. \cite{Kwan-yu_Chiu_Wang_Hu_Jameson_2012} used ideas from mesh-based methods to construct conservative differential operators. Their discrete differential operators, however, are computed globally, while we compute our differential operators locally.

We demonstrated in our previous work that the Voronoi-based finite volume method can be formulated as a generalized finite difference method \cite{Kraus_Kuhnert_Meister_Suchde_2023}.
This formulation automatically satisfies properties such as the discrete Gauss theorem and diagonal dominance.
To obtain the Voronoi-based finite volume method, we define the Voronoi cell surrounding a point $\bvec{x}_i$
\begin{equation} \label{eq:voronoi_cell}
    \Omega_i = \set{\bvec{x} \in \Omega \mid \norm{\bvec{x} - \bvec{x}_i} < \norm{\bvec{x} - \bvec{x}_j} \text{ for all } j\ne i}.
\end{equation}
In the finite volume method, a function $u$ is approximated by cell averages
\begin{equation} \label{eq:cell_average}
    u(\bvec{x}_i) \approx \frac{1}{\abs{\Omega_i}} \int_{\Omega_i} u(\bvec{x}) \dl V.
\end{equation}
This is a second-order approximation only at the centroid of the Voronoi cell $\Omega_i$ and not at $\bvec{x}_i$.
To discretize a diffusion operator, we split the boundary of $\Omega_i$ into line segments (in 2D) or surfaces (in 3D) $\Gamma_{ij} = \overline{\Omega}_i \cap \overline{\Omega}_j$ that lie between the cells corresponding to the points $\bvec{x}_i$ and $\bvec{x}_j$ respectively.
Applying the cell average ansatz \eqref{eq:cell_average} on $\Delta u$, Gauss's theorem and quadrature rules yield the discrete Laplace operator
\begin{equation*}
    \Delta u(\bvec{x}_i)
    \approx \sum_{j \in\N_i} \frac{1}{\abs{\Omega_i}} \int_{\Gamma_{ij}} \diffp{u}{\bvec{n}} \dl S
    \approx \sum_{j\in\N_i} \frac{\abs{\Gamma_{ij}}}{\abs{\Omega_i}} \frac{u(\bvec{x}_j) - u(\bvec{x}_i)}{\norm{\bvec{x}_j - \bvec{x}_i}}.
\end{equation*}
In the above equation, we used a central finite difference discretization to approximate the directional derivative at the mid-point $\bvec{x}_{ij} = \frac{\bvec{x}_i + \bvec{x}_j}{2}$ between $\bvec{x}_i$ and $\bvec{x}_j$.
With the coefficients $f_{ij}^{\Delta} = \frac{\abs{\Gamma_{ij}}}{\abs{\Omega_i}\norm{\bvec{x}_j - \bvec{x}_i}}$ for $j\in\N_i$, we obtain the generalized finite difference formulation as in \cref{eq:gfdm_formulation_neighbors}
\begin{equation} \label{eq:laplace_fvm}
    \Delta u(\bvec{x}_i) \approx \sum_{j\in\N_i} f_{ij}^{\Delta}(u(\bvec{x}_j) - u(\bvec{x}_i)).
\end{equation}
The coefficients $f_{ij}^\Delta$ describe the flux from $\Omega_i$ to $\Omega_j$ and are based on the geometric properties of the Voronoi cell.
Following similar steps, we can establish a discrete diffusion operator
\begin{equation} \label{eq:diffusion_fvm}
    \nabla\cdot(\kappa\nabla u)(\bvec{x}_i) \approx \sum_{j\in\N_i} \kappa_{ij} f_{ij}^{\Delta} (u(\bvec{x}_j) - u(\bvec{x}_i))
\end{equation}
where approximations $\kappa_{ij} \approx \kappa(\bvec{x}_{ij})$ at $\bvec{x}_{ij}\in\Gamma_{ij}$ are added in front of the coefficients $f_{ij}^\Delta$.
The coefficients of the Voronoi-based diffusion operator are finally calculated as $f_{ij}^{\nabla\cdot(\kappa\nabla)} = \kappa_{ij}f_{ij}^\Delta$ for $j\in\N_i$.
Hence, the Voronoi-based finite volume method operators represent a derived diffusion operator by weighting a discrete Laplace operator.

Motivated by this, we replace the coefficients $f_{ij}^\Delta$ with generalized finite difference coefficients $c_{ij}^\Delta$ to define the new coefficients $c_{ij}^{\nabla\cdot(\kappa\nabla)} = \kappa_{ij} c_{ij}^\Delta$ for $j\in\N_i$ leading to the derived diffusion operator
\begin{equation} \label{eq:discrete_diffusion_operator}
    \nabla\cdot(\kappa\nabla u)(\bvec{x}_i) \approx \sum_{j\in\N_i} \kappa_{ij} c_{ij}^{\Delta} (u(\bvec{x}_j) - u(\bvec{x}_i)).
\end{equation}
Recall that the diagonal entry $c_{ii}^{\nabla\cdot(\kappa\nabla)}$ follows directly as the negative sum of the off-diagonal entries.
In the above, $\kappa_{ij}$ are, similar to the finite volume approach, reconstructions of the diffusivity $\kappa$ at the mid-point $\bvec{x}_{ij} = \frac{\bvec{x}_i + \bvec{x}_j}{2}$.
The derived diffusion operator also generalizes the finite volume method, reinterpreting the coefficients $c_{ij}^\Delta$ as virtual fluxes with an underlying, hidden notion of meshless volumes and surfaces.
Note that the derived diffusion operator in \cref{eq:discrete_diffusion_operator} does not necessarily need an explicit calculation of $\nabla\kappa$ as opposed to the MLS approach.

As we pointed out before, diagonal dominance guarantees the stability of the numerical method and the discrete maximum principle for elliptic problems.
The following corollary formulates sufficient conditions for diagonally dominant derived diffusion operators.
\begin{corollary} \label{cor:diagonal_dominance}
    The derived diffusion operator in \cref{eq:discrete_diffusion_operator} at point $\bvec{x}_i$ is diagonally dominant if $c_{ij}^\Delta \ge 0$ and $\kappa_{ij} > 0$ for each neighbor $j\in\N_i$.
\end{corollary}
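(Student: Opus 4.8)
The plan is to verify, one at a time, the three requirements that \cref{def:diagonal_dominance} imposes on a diagonally dominant operator: consistency for constant functions, a nonzero diagonal entry $c_{ii}^{\Delta_\lambda}\ne 0$, and the sign condition $c_{ij}^{\Delta_\lambda}c_{ii}^{\Delta_\lambda}\le 0$ for every $j\in\N_i$. Since the coefficients are given in closed form by \cref{eq:discrete_diffusion_operator}, each requirement reduces to an elementary sign argument driven by the two hypotheses $c_{ij}^\Delta\ge 0$ and $\lambda_{ij}>0$.

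First I would dispatch consistency for constants. This is immediate from the way the diagonal coefficient is defined: summing over the stencil gives $\sum_{j\in\S_i}c_{ij}^{\Delta_\lambda} = c_{ii}^{\Delta_\lambda} + \sum_{j\in\N_i}\lambda_{ij}c_{ij}^\Delta = 0$, so the operator annihilates constants; equivalently this is the $\bvec{\alpha}=\bvec{0}$ case of \cref{thm:diffusion_consistency_conditions}. Next I would settle the sign condition. Under the hypotheses every off-diagonal coefficient $c_{ij}^{\Delta_\lambda}=\lambda_{ij}c_{ij}^\Delta$ is a product of a strictly positive and a nonnegative factor, hence $c_{ij}^{\Delta_\lambda}\ge 0$, while the diagonal coefficient $c_{ii}^{\Delta_\lambda}=-\sum_{j\in\N_i}\lambda_{ij}c_{ij}^\Delta$ is the negative of a sum of nonnegative terms, hence $c_{ii}^{\Delta_\lambda}\le 0$. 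The product of a nonnegative and a nonpositive number is nonpositive, which is exactly \eqref{eq:dd_sign_condition}.

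The only step that is not pure sign bookkeeping, and which I expect to be the crux, is showing that the diagonal entry does not vanish. Because all the terms $\lambda_{ij}c_{ij}^\Delta$ are nonnegative, $c_{ii}^{\Delta_\lambda}=0$ would force each of them to vanish, i.e.\ $c_{ij}^\Delta=0$ for all $j\in\N_i$. I would rule this out by invoking the second-order consistency of the underlying discrete Laplace operator: the condition \eqref{eq:consistency_laplace} requires, for each $k$, that $\sum_{j\in\N_i}c_{ij}^\Delta(\bvec{x}_j-\bvec{x}_i)^{2\bvec{e}_k}=2$, so the coefficients $c_{ij}^\Delta$ cannot all be zero. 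Combined with $\lambda_{ij}>0$, this in fact yields the strict inequality $c_{ii}^{\Delta_\lambda}<0$, and in particular $c_{ii}^{\Delta_\lambda}\ne 0$, which completes the verification of all three conditions and hence establishes diagonal dominance.
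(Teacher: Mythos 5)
Your proof is correct. The paper states this corollary without any proof, treating it as immediate from \cref{def:diagonal_dominance} and the definition \eqref{eq:discrete_diffusion_operator}, and your verification is exactly the intended argument: the sign bookkeeping for the off-diagonal and diagonal entries, the annihilation of constants by construction, and ruling out $c_{ii}^{\Delta_\lambda}=0$ via the second-order consistency condition $\sum_{j\in\N_i}c_{ij}^\Delta(\bvec{x}_j-\bvec{x}_i)^{2\bvec{e}_k}=2$ (noting the $j=i$ term vanishes). That last step is the only point with any content, and you handled it correctly; in fact your argument yields the strict inequality $c_{ii}^{\Delta_\lambda}<0$, slightly more than the corollary claims.
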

We always assume that $\kappa > 0$ is a positive function, such that it is natural to assume positive reconstructions $\kappa_{ij}$.
Calculating diffusion operators with the derived approach as opposed to the MLS ansatz consequently only requires a diagonally dominant discrete Laplace operator and positive reconstructions.
For the remainder of this section, we will present some ways for calculating the reconstructions $\kappa_{ij}$ with so-called \emph{reconstruction functions}, study the consistency conditions, and briefly outline the possibility of extending the idea of derived operators to general anisotropic diffusion operators.

\subsection{Reconstruction functions} \label{sec:reconstruction_functions}
First, let us investigate the reconstructions $\kappa_{ij}$.
We define reconstruction functions as functions that approximate the diffusivity at the mid-point between two points as follows.
\begin{definition} \label{def:reconstruction_function}
    Let $\kappa\colon\Omega\to\mathbb{R}$ be a function.
    A function $\kappa_{i\to}\colon\Omega\to\mathbb{R}$ is called a reconstruction function of $\kappa$ of order $q+1$ at $\bvec{x}_i\in\Omega_h$ if it satisfies
    \begin{equation*}
        \kappa_{i\to}(\bvec{x}_j) = \kappa\left(\frac{\bvec{x}_i+\bvec{x}_j}{2}\right) + O(h^{p+1})
    \end{equation*}
    for each $j\in\S_i$.
\end{definition}
With the help of reconstruction functions, we define the reconstructions as $\kappa_{ij} = \kappa_{i\to}(\bvec{x}_j)$ for each point $j\in\S_i$.
We call reconstructions symmetric if $\kappa_{ij} = \kappa_{ji}$ is always fulfilled, and otherwise we call the reconstructions asymmetric.
\Cref{lem:reconstruction_function} presents a condition for reconstruction functions in the case of a sufficiently smooth diffusivity $\kappa$ and reconstruction function $\kappa_{i\to}$.

\begin{lemma} \label{lem:reconstruction_function}
    Let $\kappa$, $\kappa_{i\to}\in C^q(\Omega)$ be sufficiently smooth.
    Then $\kappa_{i\to}$ is a reconstruction function of $\kappa$ of order $q+1$ at point $\bvec{x}_i$ if and only if
    \begin{equation} \label{lemeq:reconstruction_function}
        \partial^{\bvec{\alpha}} \kappa_{i\to}(\bvec{x}_i) =
        \frac{\partial^{\bvec{\alpha}}\kappa(\bvec{x}_i)}{2^{\abs{\bvec{\alpha}}}} + O(h^{q+1-\abs{\bvec{\alpha}}})
    \end{equation}
    holds for all $\bvec{\alpha}\in\mathbb{N}_0^d$ with $\abs{\bvec{\alpha}}\le q$.
\end{lemma}

\begin{proof}
    A Taylor expansion at $\bvec{x}_i$ yields after some computational steps
    \begin{equation*}
        \kappa_{i\to}(\bvec{x}_j) - \kappa\left(\frac{\bvec{x}_i+\bvec{x}_j}{2}\right)
        =
        \sum_{\abs{\bvec{\alpha}}=0}^q
        \left[
            \frac{\partial^{\bvec{\alpha}}\kappa_{i\to}(\bvec{x}_i)}{\bvec{\alpha}!}
            -
            \frac{\partial^{\bvec{\alpha}}\kappa(\bvec{x}_i)}{2^{\abs{\bvec{\alpha}}}\bvec{\alpha}!}
        \right]
        (\bvec{x}_j-\bvec{x}_i)^{\bvec{\alpha}}
        + O(h^{q+1})
    \end{equation*}
    for each $j\in\N_i$.
    Hence, \cref{lemeq:reconstruction_function} provides a sufficient and necessary condition for the reconstruction property according to \cref{def:reconstruction_function}.
\end{proof}

In the following, let us investigate some possible reconstruction functions based on the discrete values $\kappa_i=\kappa(\bvec{x}_i)$ for $i=1,\dots,N$.
We will start by gradient-free reconstruction functions that do not require the calculation of $\nabla\kappa$.
The three Pythagorean means
\begin{subequations} \label{eq:pythagorean_means}
    \begin{align}
        \kappa_{ij} &= \frac{\kappa_i + \kappa_j}{2},                 \label{eq:arithmetic_mean} \\
        \kappa_{ij} &= 2\frac{\kappa_i\kappa_j}{\kappa_i + \kappa_j}, \label{eq:harmonic_mean} \\
        \kappa_{ij} &= \sqrt{\kappa_i \kappa_j},                      \label{eq:geometric_mean}
    \end{align}
\end{subequations}
which are the arithmetic, harmonic, and geometric mean, respectively, satisfy the reconstruction properties from \cref{lem:reconstruction_function}.
Reconstructions based on Pythagorean means provide symmetric reconstructions $\kappa_{ij} = \kappa_{ji}$ and are second-order accurate.
They also provide positive reconstructions $\kappa_{ij} > 0$ provided $\kappa_i$, $\kappa_j > 0$ is satisfied.

Numerically computing the gradients $\nabla_i\kappa$ for all points $i=1,\dots,N$ and using a Taylor expansion
\begin{subequations} \label{eq:taylor_reconstructions}
\begin{equation} \label{eq:taylor}
    \kappa_{ij} = \kappa_i + \frac{1}{2}\dotprod{\nabla_i\kappa}{\bvec{x}_j - \bvec{x}_i},
\end{equation}
or a skew Taylor expansion
\begin{equation} \label{eq:skew_taylor}
    \kappa_{ij} = \kappa_j - \frac{1}{2}\dotprod{\nabla_i\kappa}{\bvec{x}_j - \bvec{x}_i}
\end{equation}
\end{subequations}
yields second-order reconstructions.
Empirically, these Taylor-based reconstructions lead to worse results than the Pythagorean means, possibly due to the asymmetry of the reconstructions $\kappa_{ij} \ne \kappa_{ji}$, while requiring additional computational steps.
In addition, positive reconstructions cannot be guaranteed such that a correction of the discrete gradients $\nabla_i \kappa$ or the use of limiters has to be applied.

A higher-order scheme follows from a one-dimensional Hermite interpolation
\begin{equation} \label{eq:hermite_interpolation}
    \kappa_{ij} = \frac{\kappa_i + \kappa_j}{2} + \frac{1}{8} \dotprod{\nabla_i \kappa - \nabla_j \kappa}{\bvec{x}_j - \bvec{x}_i}.
\end{equation}
In this case, the reconstructions are symmetric and do not require more computation time than the Taylor-based reconstructions in \cref{eq:taylor_reconstructions}.
Similarly to the Taylor-based reconstructions, the reconstructions are not necessarily positive.

\subsection{Consistency conditions}
Restricting ourselves to reconstruction functions that fulfill the property from \cref{lem:reconstruction_function}, we derive the consistency conditions for the derived discrete diffusion operator.
\Cref{thm:diffusion_consistency_conditions} shows that the discrete diffusion operator meets the consistency conditions from \cref{eq:consistency_diffusion} approximately, as opposed to the exact reproducibility property of the discrete Laplacian.
It also shows that derived diffusion operators have an intrinsic notion of the gradient of the diffusivity, and offer a regularization at the scale of the MLS discretization.

\begin{theorem} \label{thm:diffusion_consistency_conditions}
    Let $\kappa$, $\kappa_{i\to} \in C^q(\Omega)$ and the reconstruction functions $\kappa_{i\to}$ fulfill the reconstruction property in \cref{lem:reconstruction_function}.
    If the coefficients $c_{ij}^\Delta$ fulfill \cref{eq:laplace_accuracy}, then the derived diffusion operator satisfies the consistency conditions
    \[ \sum_{j\in\S_i} c_{ij}^{\nabla\cdot(\kappa\nabla)}(\bvec{x}_j - \bvec{x}_i)^{\bvec{\alpha}} =
    \begin{cases}
        0,                                               &\text{if } \bvec{\alpha} = \bvec{0},  \\
        \partial_k \kappa(\bvec{x}_i) + O(h^p) + O(h^q), &\text{if } \bvec{\alpha} = \bvec{e}_k, \\
        2\kappa(\bvec{x}_i) + O(h^p) + O(h^{q+1}),       &\text{if } \bvec{\alpha} = 2\bvec{e}_k, \\
        O(h^p),                                          &\text{else},
    \end{cases} \]
    for all $\bvec{\alpha}\in\mathbb{N}_0^d$.
\end{theorem}

\begin{proof}
    The case for $\bvec{\alpha}=\bvec{0}$ follows directly from the definition of the coefficients $c_{ij}^{\nabla\cdot(\kappa\nabla)}$.
    For $\abs{\bvec{\alpha}} > 0$, we apply \cref{thm:derived_from_laplace} with the reconstruction function $\kappa_{i\to}$ in place of $u$ to obtain
    \begin{equation*}
        \sum_{j\in\S_i} c_{ij}^{\nabla\cdot(\kappa\nabla)} (\bvec{x}_j - \bvec{x}_i)^{\bvec{\alpha}} =
        \left.
        \begin{cases}
            2\partial_k \kappa_{i\to}(\bvec{x}_i),  &\text{if } \bvec{\alpha} = \bvec{e}_k, \\
            2\kappa_{i\to}(\bvec{x}_i),             &\text{if } \bvec{\alpha} = 2\bvec{e}_k,\\
            0,                                      &\text{else},
        \end{cases}
        \right\}
        + O(h^p).
    \end{equation*}
    Applying \cref{lem:reconstruction_function} yields
    \begin{align*}
        2\partial_k \kappa_{i\to}(\bvec{x}_i) &= \partial_k \kappa(\bvec{x}_i) + O(h^{q}), \\
        2\kappa_{i\to}(\bvec{x}_i)            &= 2\kappa(\bvec{x}_i) + O(h^{q+1}),
    \end{align*}
    concluding the proof.
\end{proof}

\Cref{thm:diffusion_consistency_conditions} allows us to derive the accuracy of the discrete diffusion operator.
As stated in \cref{thm:diffusion_accuracy}, the new discrete diffusion operator inherits the accuracy of the underlying discrete Laplace operator with an additional error term that depends on the accuracy of the reconstruction function.
We will record this statement in the following corollary.

\begin{corollary} \label{thm:diffusion_accuracy}
    Let $u \in C^{p+2}(\Omega)$, $\kappa$, $\kappa_{i\to}\in C^{q}(\Omega)$, and the reconstruction function $\kappa_{i\to}$ fulfill the reconstruction property in \cref{lem:reconstruction_function}.
    If the coefficients $c_{ij}^\Delta$ meet the condition in \cref{eq:laplace_accuracy}, then the derived diffusion operator fulfills
    \[ \nabla\cdot(\kappa\nabla u)(\bvec{x}_i) = \sum_{j\in\S_i} c_{ij}^{\nabla\cdot(\kappa\nabla)} u(\bvec{x}_j) + O(h^p) + O(h^q). \]
\end{corollary}

\begin{proof}
    The calculation of the accuracy of the derived diffusion operator follows the same steps as shown in \cref{sec:accuracy}.
    Only the different error terms from \cref{thm:diffusion_consistency_conditions} need to be accounted for by using $O(h^{q+1}) = O(h^q)$ for $h\to0$.
    Application of \cref{rem:approximate_consistency_conditions} finalizes the proof.
\end{proof}

\subsection{Anisotropic diffusion}
Recall that the most general case of diffusion operators is based on a general diffusion matrix $(\kappa^{kl})_{k,l=1}^d=\bmat{\kappa}$ as in \cref{eq:diffusion_operator}.
But derived operators can also be used to define discrete anisotropic diffusion operators in the case of a symmetric diffusivity $\bmat{\kappa}=\bmat{\kappa}^T$.
In that case, it is required to calculate all second-degree derivatives corresponding to non-zero entries in the diffusivity matrix $\bmat{\kappa}$, for example $c_{ij}^{x_1^2}$, $c_{ij}^{x_1 x_2}$, and $c_{ij}^{x_2^2}$ in 2D.
Let $c_{ij}^{kl}$ be the coefficients that discretize the differential operator $\difsp{}{x_k,x_l}$, then a derived anisotropic diffusion operator can be obtained by setting the coefficients
\begin{equation*}
    c_{ij}^{\nabla\cdot(\bmat{\kappa}\nabla)} = \sum_{k, l=1}^d \kappa_{ij}^{kl} c_{ij}^{kl}
\end{equation*}
for $j\in\N_i$.
As in the previous section, the weights $\kappa_{ij}^{kl} \approx \kappa^{kl}(\bvec{x}_{ij})$ are reconstructions of the individual components in the diffusivity matrix at the mid-points $\bvec{x}_{ij}=\frac{1}{2}(\bvec{x}_i+\bvec{x}_j)$.
From this, it is straightforward to verify that this discrete anisotropic diffusion operator fulfills the expected consistency conditions.

\subsection{An alternative view}
In \cref{sec:derived_operators}, we motivated derived operators by applying the Leibniz rule to the product of two functions.
Similarly, we can also obtain the new discrete diffusion operator by representing the diffusion operator in terms of other, well-known, operators and discretizing them instead.
For example, consider the formulation of the diffusion operator as $\nabla\cdot(\kappa\nabla u) = \dotprod{\nabla\kappa}{\nabla u} + \kappa\Delta u$ from \cref{eq:diffusion_expanded}.
Assuming that we have an approximation of the gradient $\nabla_i\kappa\approx\nabla\kappa(\bvec{x}_i)$, individual discretization of the remaining differential operators on the right-hand side yields a discrete diffusion operator.
Using $c_{ij}^\Delta$ to discretize the Laplacian and the derived gradient $c_{ij}^\Delta (\bvec{x}_j-\bvec{x}_i)/2$ to discretize the gradient, we obtain together with \cref{eq:gfdm_formulation_neighbors}
\begin{equation*}
    \nabla\cdot(\kappa\nabla u)(\bvec{x}_i) =
    \sum_{j\in\N_i} \left(\kappa(\bvec{x}_i)+\frac{1}{2}\dotprod{\nabla_i\kappa}{\bvec{x}_j-\bvec{x}_i}\right) c_{ij}^\Delta (u(\bvec{x}_j)-u(\bvec{x}_i)).
\end{equation*}
The coefficients represent a derived diffusion operator with Taylor-based reconstructions \eqref{eq:taylor}.
With similar steps, a derived diffusion operator with skew Taylor-based reconstructions \eqref{eq:skew_taylor} can be obtained from the reformulation $\nabla\cdot(\kappa\nabla u) = \Delta(\kappa u) - \nabla\kappa\cdot\nabla u - u\Delta\kappa$.
Arithmetic averaging of the prior reformulations yields $\nabla\cdot(\kappa\nabla u) = \frac{1}{2} (\Delta(\kappa u) + \kappa\Delta u - u\Delta\kappa)$ resulting in the arithmetic mean reconstructions \eqref{eq:arithmetic_mean}.
Currently, it is not known if there exist reformulations that result in the other reconstructions, for example the harmonic mean \eqref{eq:harmonic_mean} or Hermite interpolation \eqref{eq:hermite_interpolation}.

%============================================ Results ============================================%
\section{Numerical results} \label{sec:results}
In this section, we test and compare the presented methods on a set of selected test cases.
But first, we introduce abbreviations for the methods.
The MLS based calculation of the diffusion operator with monomials up to degree $p$ is represented by MLS($p$, $q$) where we approximate the gradient with an MLS based discrete gradient of order $q$ by $\nabla_i\kappa = \nabla\kappa(\bvec{x}_i) + O(h^q)$.
The derived diffusion operator is characterized by the reconstruction function and the degree of monomials that is used to calculate the underlying discrete Laplace operator.
Hence it is abbreviated by DDO($p$, AVG) where $p$ is the degree of monomials for the discrete Laplace operator and AVG is an acronym for the reconstruction function.
Particularly, we use AM for the arithmetic mean \eqref{eq:arithmetic_mean}, HM for the harmonic mean \eqref{eq:harmonic_mean}, TE for the Taylor expansion \eqref{eq:taylor}, and HI for the Hermite interpolation \eqref{eq:hermite_interpolation}.
The remaining reconstruction functions presented in \cref{sec:reconstruction_functions} are not considered in this work because they did not produce significantly different results from those presented in the following.
Note that second-order discrete gradients have been used in the calculation of the Taylor expansion and the Hermite interpolation reconstructions.

All simulations are performed on unstructured two-dimensional point clouds with different refinement levels.
The point clouds were generated with the commercial software MESHFREE \cite{MESHFREE} with an advancing front point cloud generation technique \cite{Suchde_Jacquemin_Davydov_2023}, which enables us to demonstrate the applicability of our method for point clouds used in complex industrial applications.
Because the point clouds do not conform to any interface, we illustrate that our method does not require a special point cloud treatment.

We calculate the relative discrete $L^\infty$ error of the numerical solution $u_h$ given by the discrete values $u_i$ to the analytic reference solution $u\not\equiv 0$ as
\begin{equation*}
    \norm{u - u_h}_\infty = \frac{\max_{i=1,\dots,N} \abs{u_i - u(\bvec{x}_i)}}{\max_{i=1,\dots,N} u(\bvec{x}_i) - \min_{i=1,\dots,N} u(\bvec{x}_i)}.
\end{equation*}
All occurring linear systems are solved with the BiCGSTAB(2) method and an incomplete LU preconditioning.
We test the methods on Poisson's equation \eqref{eq:poisson_equation} in \cref{ssec:poisson_results} and the heat equation \eqref{eq:heat_equation_base} in \cref{ssec:heat_results}.

\subsection{Poisson's equation} \label{ssec:poisson_results}
First, we study the behavior of the new operator using Poisson's equation as in \cref{eq:poisson_equation}. We distinguish between problems with smooth diffusivity and discontinuous diffusivity.

Expanding the diffusion operator for a differentiable diffusivity $\kappa\in C^1(\Omega)$ and a sufficiently smooth analytic solution $u \in C^2(\Omega)$ as in \cref{eq:diffusion_expanded} allows us to define the source term $Q = -\nabla\cdot(\kappa\nabla u) = -\dotprod{\nabla\kappa}{\nabla u} - \kappa\Delta u$.
For the elliptic interface test case, we assume that $\kappa$ is piecewise constant and has a jump on a $d-1$ dimensional manifold $\Gamma\subset\Omega$. For a smooth function $f\in C^2(\Omega)$ with $f\vert_\Gamma = 0$, we define the analytic solution by $u = f/\kappa + c$ with a constant $c$. Because $f$ vanishes on the interface $\Gamma$, division by $\kappa$ introduces only a weak discontinuity. Hence, the resulting function is weakly differentiable, $u\in H^1(\Omega)$, with $\nabla u = \nabla f/ \kappa$ and thus $\kappa\nabla u = \nabla f$. As a consequence, the source term $Q = -\nabla\cdot(\kappa\nabla u) = -\Delta f$ is continuous.

\paragraph{Test case 1}
\begin{figure}
\begin{tikzpicture}
\begin{axis}[
    xlabel=$x_1$, xtick={0, 0.5, 1},
    ylabel=$x_2$, ytick={0, 0.5, 1},
    zlabel=$u$,
    width=\halfwidth,
    colormap name = viridis,
    z buffer=sort,
]
    \addplot3[only marks, mark size=1, scatter]
    table[x=x, y=y, z=poisson_smooth_solution, col sep=comma]{figures/data/testcases_05.csv};
\end{axis}
\end{tikzpicture}
\hfill
\begin{tikzpicture}
\begin{axis}[
    xlabel=$x_1$, xtick={0, 0.5, 1},
    ylabel=$x_2$, ytick={0, 0.5, 1},
    zlabel=$\kappa$,
    width=\halfwidth,
    colormap name = viridis,
    z buffer=sort,
]
    \addplot3[only marks, mark size=1, scatter]
    table[x=x, y=y, z=poisson_smooth_diffusivity, col sep=comma]{figures/data/testcases_05.csv};
\end{axis}
\end{tikzpicture}
\caption{Analytical solution (left) and diffusivity (right) for test case 1.}
\label{fig:setup_poisson_smooth}
\end{figure}

\begin{figure}
\centering
\begin{tikzpicture}
\begin{loglogaxis}[
    xlabel=$N$,
    ylabel=$\norm{u-u_h}_\infty$,
    cycle list name=wonglist,
    xmax=2e6,
    width=\halfwidth,
    legend style={fill=none, draw=none, font=\footnotesize},
    legend pos=south west,
    legend cell align=left,
]
    \addplot table[x=N, y=mls_2_2, col sep=comma]{figures/data/poisson_smooth.csv};
    \addlegendentry{MLS(2, 2)}
    \addplot table[x=N, y=mls_4_2, col sep=comma]{figures/data/poisson_smooth.csv};
    \addlegendentry{MLS(4, 2)}
    \addplot table[x=N, y=mls_4_4, col sep=comma]{figures/data/poisson_smooth.csv};
    \addlegendentry{MLS(4, 4)}

    \addplot[no marks, dotted, black, thick]
    table[x=N, y expr={1/x}, col sep=comma] {figures/data/poisson_smooth.csv}
    node[right] {$O(h^2)$};

    \addplot[no marks, dotted, black, thick]
    table[x=N, y expr={3/x^2}, col sep=comma] {figures/data/poisson_smooth.csv}
    node[right] {$O(h^4)$};
\end{loglogaxis}
\end{tikzpicture}
\caption{$L^\infty$ error for the solution of test case 1 with MLS based operators depending on the number of points $N$. The dotted lines depict the corresponding reference convergence rate.}
\label{fig:poisson_smooth_mls}
\end{figure}

\begin{figure}
\begin{subfigure}{\halfwidth}
\begin{tikzpicture}
\begin{loglogaxis}[
    xlabel=$N$,
    ylabel=$\norm{u-u_h}_\infty$,
    cycle list name=wonglist,
    xmax=2e6,
    ymin=1e-6,
    width=\textwidth,
    legend style={fill=none, draw=none, font=\footnotesize},
    legend pos=south west,
    legend cell align=left,
]

    \addplot table[x=N, y=ddo_2_am, col sep=comma]{figures/data/poisson_smooth.csv};
    \addplot table[x=N, y=ddo_2_hm, col sep=comma]{figures/data/poisson_smooth.csv};
    \addplot table[x=N, y=ddo_2_te, col sep=comma]{figures/data/poisson_smooth.csv};
    \addplot table[x=N, y=ddo_2_hi, col sep=comma]{figures/data/poisson_smooth.csv};

    \addlegendentry{DDO(2, AM)}
    \addlegendentry{DDO(2, HM)}
    \addlegendentry{DDO(2, TE)}
    \addlegendentry{DDO(2, HI)}

    \addplot[no marks, dotted, black, thick]
    table[x=N, y expr={3/x}, col sep=comma] {figures/data/poisson_smooth.csv}
    node[right] {$O(h^2)$};

\end{loglogaxis}
\end{tikzpicture}
\end{subfigure}
\hfil
\begin{subfigure}{\halfwidth}
\begin{tikzpicture}
\begin{loglogaxis}[
    xlabel=$N$,
    ylabel=$\norm{u-u_h}_\infty$,
    cycle list name=wonglist,
    xmax=2e6,
    ymin=1e-12,
    width=\textwidth,
    legend style={fill=none, draw=none, font=\footnotesize},
    legend pos=south west,
    legend cell align=left,
]

    \addplot table[x=N, y=ddo_4_am, col sep=comma]{figures/data/poisson_smooth.csv};
    \addplot table[x=N, y=ddo_4_hm, col sep=comma]{figures/data/poisson_smooth.csv};
    \addplot table[x=N, y=ddo_4_te, col sep=comma]{figures/data/poisson_smooth.csv};
    \addplot table[x=N, y=ddo_4_hi, col sep=comma]{figures/data/poisson_smooth.csv};

    \addlegendentry{DDO(4, AM)}
    \addlegendentry{DDO(4, HM)}
    \addlegendentry{DDO(4, TE)}
    \addlegendentry{DDO(4, HI)}

    \addplot[no marks, dotted, black, thick] table[x=N, y expr={0.2/x}, col sep=comma] {figures/data/poisson_smooth.csv} node[right] {$O(h^2)$};

    \addplot[no marks, dotted, black, thick] table[x=N, y expr={3/x^2}, col sep=comma] {figures/data/poisson_smooth.csv} node[right] {$O(h^4)$};
\end{loglogaxis}
\end{tikzpicture}
\end{subfigure}
\caption{$L^\infty$ error for the solution of test case 1 with derived diffusion operators depending on the number of points $N$. The dotted lines depict the corresponding reference convergence rate.}
\label{fig:poisson_smooth_ddo}
\end{figure}

Using the analytic solution $u(\bvec{x}) = \sin(\pi x_1) \sin(\pi x_2)$,
we choose $\kappa(\bvec{x}) = \exp(x_1 - x_2^2)$, displayed in \cref{fig:setup_poisson_smooth}.
The results for the different methods are summarized in \cref{fig:poisson_smooth_mls,fig:poisson_smooth_ddo}.
For the MLS ansatz in \cref{fig:poisson_smooth_mls}, we obtain second-order convergence for MLS(2, 2) and MLS(4, 2) while MLS(4, 4) yields fourth-order convergence.
This suggests that the MLS ansatz requires sufficiently accurate calculations of the diffusivity gradient.
It is however noteworthy that MLS(4, 2) provides reduced errors in comparison to MLS(2, 2).
On the other hand, the derived diffusion operator provides second-order convergence for all reconstructions if the underlying discrete Laplacian is second-order accurate.
We observe fourth-order convergence for the arithmetic averaging, harmonic averaging, and the Hermite interpolation with DDO(4, TE) being the only method limited to second-order convergence.
This is due to second-order gradients used in the reconstructions.
The overall results suggest that the dominant error of the derived diffusion operators in \cref{thm:diffusion_accuracy} stems from the discrete Laplace operator and not from the reconstructions.

\paragraph{Test case 2}
\begin{figure}
\centering
\begin{tikzpicture}
\begin{axis}[
    xlabel=$x_1$, xtick={0, 0.5, 1},
    ylabel=$x_2$, ytick={0, 0.5, 1},
    width=0.55\textwidth,
    colormap name = viridis,
    axis equal image,
    colorbar,
    point meta min=1,
    point meta max=3,
]
    \addplot[only marks, mark size=1, scatter]
    table[x=x, y=y, point meta={\thisrow{poisson_smooth_difficult_diffusivity}}, col sep=comma]{figures/data/testcases_05.csv};
\end{axis}
\end{tikzpicture}
\caption{Diffusivity $\kappa$ for test case 2.}
\label{fig:setup_poisson_smooth_difficult}
\end{figure}

\begin{figure}
\centering
\begin{tikzpicture}
\begin{loglogaxis}[
    xlabel=$N$,
    ylabel=$\norm{u-u_h}_\infty$,
    cycle list name=wonglist,
    xmax=2e6,
    width=\halfwidth,
    legend style={fill=none, draw=none, font=\footnotesize},
    legend pos=south west,
    legend cell align=left,
]
    \addplot table[x=N, y=mls_2_2, col sep=comma]{figures/data/poisson_smooth_difficult.csv};
    \addlegendentry{MLS(2, 2)}
    \addplot table[x=N, y=mls_4_2, col sep=comma]{figures/data/poisson_smooth_difficult.csv};
    \addlegendentry{MLS(4, 2)}
    \addplot table[x=N, y=mls_4_4, col sep=comma]{figures/data/poisson_smooth_difficult.csv};
    \addlegendentry{MLS(4, 4)}

    \addplot[no marks, dotted, black, thick] table[x=N, y expr={5/x}, col sep=comma] {figures/data/poisson_smooth_difficult.csv} node[right] {$O(h^2)$};

    \addplot[no marks, dotted, black, thick] table[x=N, y expr={300/x^2}, col sep=comma] {figures/data/poisson_smooth_difficult.csv} node[right] {$O(h^4)$};
\end{loglogaxis}
\end{tikzpicture}
\caption{$L^\infty$ error for the solution of test case 2 with MLS based operators depending on the number of points $N$. The dotted lines depict the corresponding reference convergence rate.}
\label{fig:poisson_smooth_difficult_mls}
\end{figure}

\begin{figure}
\begin{subfigure}{\halfwidth}
\begin{tikzpicture}
\begin{loglogaxis}[
    xlabel=$N$,
    ylabel=$\norm{u-u_h}_\infty$,
    cycle list name=wonglist,
    xmax=2e6,
    ymin=1e-6,
    width=\textwidth,
    legend style={fill=none, draw=none, font=\footnotesize},
    legend pos=south west,
    legend cell align=left,
]

    \addplot table[x=N, y=ddo_2_am, col sep=comma]{figures/data/poisson_smooth_difficult.csv};
    \addplot table[x=N, y=ddo_2_hm, col sep=comma]{figures/data/poisson_smooth_difficult.csv};
    \addplot table[x=N, y=ddo_2_te, col sep=comma]{figures/data/poisson_smooth_difficult.csv};
    \addplot table[x=N, y=ddo_2_hi, col sep=comma]{figures/data/poisson_smooth_difficult.csv};

    \addlegendentry{DDO(2, AM)}
    \addlegendentry{DDO(2, HM)}
    \addlegendentry{DDO(2, TE)}
    \addlegendentry{DDO(2, HI)}

    \addplot[no marks, dotted, black, thick]
    table[x=N, y expr={3/x}, col sep=comma] {figures/data/poisson_smooth_difficult.csv}
    node[right] {$O(h^2)$};

\end{loglogaxis}
\end{tikzpicture}
\end{subfigure}
\hfil
\begin{subfigure}{\halfwidth}
\begin{tikzpicture}
\begin{loglogaxis}[
    xlabel=$N$,
    ylabel=$\norm{u-u_h}_\infty$,
    cycle list name=wonglist,
    xmax=2e6,
    ymin=1e-10,
    width=\textwidth,
    legend style={fill=none, draw=none, font=\footnotesize},
    legend pos=south west,
    legend cell align=left,
]

    \addplot table[x=N, y=ddo_4_am, col sep=comma]{figures/data/poisson_smooth_difficult.csv};
    \addplot table[x=N, y=ddo_4_hm, col sep=comma]{figures/data/poisson_smooth_difficult.csv};
    \addplot table[x=N, y=ddo_4_te, col sep=comma]{figures/data/poisson_smooth_difficult.csv};
    \addplot table[x=N, y=ddo_4_hi, col sep=comma]{figures/data/poisson_smooth_difficult.csv};

    \addlegendentry{DDO(4, AM)}
    \addlegendentry{DDO(4, HM)}
    \addlegendentry{DDO(4, TE)}
    \addlegendentry{DDO(4, HI)}

    \addplot[no marks, dotted, black, thick] table[x=N, y expr={5/x}, col sep=comma] {figures/data/poisson_smooth_difficult.csv} node[right] {$O(h^2)$};

    \addplot[no marks, dotted, black, thick] table[x=N, y expr={300/x^2}, col sep=comma] {figures/data/poisson_smooth_difficult.csv} node[right] {$O(h^4)$};
\end{loglogaxis}
\end{tikzpicture}
\end{subfigure}
\caption{$L^\infty$ error for the solution of test case 2 with derived diffusion operators depending on the number of points $N$. The dotted lines depict the corresponding reference convergence rate.}
\label{fig:poisson_smooth_difficult_ddo}
\end{figure}

For this test case, we leave the analytic solution as in test case 1 and use a more challenging diffusivity $\kappa(\bvec{x}) = 2 + \sin(6\pi x_1) \sin(6\pi x_2)$, as shown in \cref{fig:setup_poisson_smooth_difficult}.
Because of the local extrema, the averaging reconstructions might fail to reconstruct the function values at the mid-points $\bvec{x}_{ij}$.
The results in \cref{fig:poisson_smooth_difficult_mls} let us draw similar conclusions as in test case 1 for the MLS method.
However, for the derived diffusion operators, we observe in \cref{fig:poisson_smooth_difficult_ddo} that the Hermite interpolation slightly outperforms the averaging reconstructions on the coarser point clouds.
However, the errors become comparable for the finest point cloud in the fourth-order case.
As before, the Taylor expansion reconstruction only shows second-order convergence.
In general, errors are higher for this test case in comparison to test case 1.

\paragraph{Test case 3}
\begin{figure}
\begin{tikzpicture}
\begin{axis}[
    xlabel=$x_1$, xtick={0, 0.5, 1},
    ylabel=$x_2$, ytick={0, 0.5, 1},
    zlabel=$u$,
    width=\halfwidth,
    colormap name = viridis,
    z buffer=sort,
]
    \addplot3[only marks, mark size=1, scatter]
    table[x=x, y=y, z=poisson_discontinuous_solution, col sep=comma]{figures/data/testcases_05.csv};
\end{axis}
\end{tikzpicture}
\hfill
\begin{tikzpicture}
\begin{axis}[
    xlabel=$x_1$, xtick={0, 0.5, 1},
    ylabel=$x_2$, ytick={0, 0.5, 1},
    zlabel=$\kappa$,
    width=\halfwidth,
    colormap name = viridis,
    z buffer=sort,
]
    \addplot3[only marks, mark size=1, scatter]
    table[x=x, y=y, z=poisson_discontinuous_diffusivity, col sep=comma]{figures/data/testcases_05.csv};
\end{axis}
\end{tikzpicture}
\caption{Analytical solution (left) and diffusivity (right) for test case 3.}
\label{fig:setup_poisson_discontinuous}
\end{figure}

\begin{figure}
\begin{subfigure}{\halfwidth}
\begin{tikzpicture}
\begin{loglogaxis}[
    xlabel=$N$,
    ylabel=$\norm{u-u_h}_\infty$,
    cycle list name=wonglist,
    xmax=2e6,
    width=\textwidth,
    legend style={fill=none, draw=none, font=\footnotesize},
    legend pos=south west,
    legend cell align=left,
]
    \addplot table[x=N, y=mls_2_2, col sep=comma]{figures/data/poisson_discontinuous.csv};
    \addlegendentry{MLS(2, 2)}

    \addplot table[x=N, y=ddo_2_te, col sep=comma]{figures/data/poisson_discontinuous.csv};
    \addlegendentry{DDO(2, TE)}

    \addplot table[x=N, y=ddo_2_hi, col sep=comma]{figures/data/poisson_discontinuous.csv};
    \addlegendentry{DDO(2, HI)}

    \addplot[no marks, dotted, black, thick]
    table[x=N, y expr={10/sqrt(x)}, col sep=comma] {figures/data/poisson_discontinuous.csv}
    node[right] {$O(h)$};

\end{loglogaxis}
\end{tikzpicture}
\end{subfigure}
\hfil
\begin{subfigure}{\halfwidth}
\begin{tikzpicture}
\begin{loglogaxis}[
    xlabel=$N$,
    ylabel=$\norm{u-u_h}_\infty$,
    cycle list name=wonglist,
    xmax=2e6,
    width=\textwidth,
    legend style={fill=none, draw=none, font=\footnotesize},
    legend pos=south west,
    legend cell align=left,
]

    \addplot table[x=N, y=ddo_2_am, col sep=comma]{figures/data/poisson_discontinuous.csv};
    \addplot table[x=N, y=ddo_2_hm, col sep=comma]{figures/data/poisson_discontinuous.csv};

    \addlegendentry{DDO(2, AM)}
    \addlegendentry{DDO(2, HM)}

    \addplot[no marks, dotted, black, thick]
    table[x=N, y expr={1/sqrt(x)}, col sep=comma] {figures/data/poisson_discontinuous.csv}
    node[right] {$O(h)$};

\end{loglogaxis}
\end{tikzpicture}
\end{subfigure}
\caption{$L^\infty$ error for the solution of test case 3 depending on the number of points $N$. The dotted lines depict the corresponding reference convergence rate.}
\label{fig:poisson_discontinuous}
\end{figure}

Now we consider a test case with a discontinuous diffusivity $\kappa$. With $c = \frac{3}{4}$, we choose $f(\bvec{x}) = \sin(\pi x_1) \sin(\pi x_2) - c$ and a piecewise constant diffusivity with a jump of eight orders of magnitude
\[
    \kappa(\bvec{x}) = \begin{cases}
        10^8, & f(\bvec{x}) \ge 0, \\
        1,    & f(\bvec{x}) < 0.
    \end{cases}
\]
The resulting interface $\Gamma = \set{\bvec{x} \mid f(\bvec{x}) = 0}$ can be seen in \cref{fig:setup_poisson_discontinuous} along which $u$ has a weak discontinuity and $\kappa$ has a jump.
For this test case, we restrict ourselves to consistency conditions for monomials of up to second degree because we we will not exceed first-order convergence.

The results on the left-hand side in \cref{fig:poisson_discontinuous} show methods that do not converge for test case 3, namely MLS(2, 2), and the derived diffusion operator with gradient-based reconstructions DDO(2, TE) and DDO(2, HI).
While DDO(2, HI) provides good results for some point clouds, it did not perform reliably for the entire suite of point clouds.
The main reason for these reconstructions failing are overshoots and undershoots resulting in negative reconstructions $\kappa_{ij} < 0$ for some points.
Negative reconstructions directly violate the diagonal dominance of the derived diffusion operator leading to a lack of stability conditions of the numerical scheme.
Negative reconstructions can be circumvented by other formulations of discrete gradients that are less susceptible to jumps, for example WENO techniques \cite{Friedrich_1998}.
This leads to a weak notion of the gradient and, in this example, to vanishing discrete gradients $\nabla_i\kappa\approx\bvec{0}$.
Thus, the Hermite interpolation would reduce to the arithmetic averaging.

The arithmetic averaging DDO(2, AM) and the harmonic averaging DDO(2, HM) provide first-order convergence due to the bound $\min(\kappa_i, \kappa_j) \le \kappa_{ij} \le \max(\kappa_i, \kappa_j)$ as can be seen on the right-hand side of \cref{fig:poisson_discontinuous}.
The harmonic averaging clearly outperforms the arithmetic averaging in this test case.
This has also been observed for other interface problems that are not presented in this work.
Higher-order convergence can only be achieved by explicitly resolving the interface by adding points and performing a domain decomposition as has been previously presented by Davydov and Safarpoor \cite{Davydov_Safarpoor_2021}.

\subsection{Heat equation} \label{ssec:heat_results}
To solve the heat equation \eqref{eq:heat_equation_base}, we multiply the analytic solutions from \cref{ssec:poisson_results}, which we call $\bar{u}$, with a time-dependent function $a$ such that $u(\bvec{x}, t) = a(t) \bar{u}(\bvec{x})$ is an analytic solution to the heat equation with the source term $Q(\bvec{x}, t) = a'(t) \bar{u}(\bvec{x}) - a(t) \nabla\cdot(\kappa\nabla) \bar{u}(\bvec{x}).$

Suchde \cite{Suchde_2018} showed for a similar problem with a constant diffusivity $\kappa = 1$ that the absence of diagonally dominant operators leads to severe instabilities in the numerical solution after a few time steps.
Similarly to the test case used therein, we define $a(t) = \exp(-4t)$ for all test cases in this section and simulate until $t=1$.
Doing so, we measure the maximum relative $L^\infty$ error in the iteration process.

The time step size $\dt$ used in the implicit trapezoidal rule fulfills the CFL condition
\begin{equation} \label{eq:CFL_condition}
    \dt \le 0.7 \Delta x^2,
\end{equation}
where $\Delta x$ is the smallest distance between two points in the point cloud.
Since the implicit trapezoidal rule is A-stable, the CFL condition is not necessary for ensuring stability of the numerical scheme.
But since it is not positivity preserving, imposing a CFL condition can help to ensure positivity of the numerical solutions.

\paragraph{Test case 4}
\begin{figure}
\begin{subfigure}{\halfwidth}
\begin{tikzpicture}
\begin{loglogaxis}[
    xlabel=$N$,
    ylabel=$\norm{u-u_h}_\infty$,
    cycle list name=wonglist,
    xmax=3e4,
    ymin=5e-9,
    width=\textwidth,
    legend style={fill=none, draw=none, font=\footnotesize},
    legend pos=south west,
    legend cell align=left,
]
    \addplot table[x=N, y=mls_2_2, col sep=comma]{figures/data/heat_smooth.csv};
    \addlegendentry{MLS(2, 2)}

    \addplot table[x=N, y=mls_4_2, col sep=comma]{figures/data/heat_smooth.csv};
    \addlegendentry{MLS(4, 2)}

    \addplot table[x=N, y=mls_4_4, col sep=comma]{figures/data/heat_smooth.csv};
    \addlegendentry{MLS(4, 4)}

    \addplot[no marks, dotted, black, thick]
    table[x=N, y expr={1/x}, col sep=comma] {figures/data/heat_smooth.csv}
    node[right] {$O(h^2)$};

    \addplot[no marks, dotted, black, thick]
    table[x=N, y expr={3/x^2}, col sep=comma] {figures/data/heat_smooth.csv}
    node[right] {$O(h^4)$};

\end{loglogaxis}
\end{tikzpicture}
\end{subfigure}
\hfil
\begin{subfigure}{\halfwidth}
\begin{tikzpicture}
\begin{loglogaxis}[
    xlabel=$N$,
    ylabel=$\norm{u-u_h}_\infty$,
    cycle list name=wonglist,
    xmax=3e4,
    ymin=5e-11,
    width=\textwidth,
    legend style={fill=none, draw=none, font=\footnotesize},
    legend pos=south west,
    legend cell align=left,
]

    \addplot table[x=N, y=ddo_2_am, col sep=comma]{figures/data/heat_smooth.csv};
    \addplot table[x=N, y=ddo_2_hm, col sep=comma]{figures/data/heat_smooth.csv};
    \addplot table[x=N, y=ddo_4_am, col sep=comma]{figures/data/heat_smooth.csv};
    \addplot table[x=N, y=ddo_4_hm, col sep=comma]{figures/data/heat_smooth.csv};

    \addlegendentry{DDO(2, AM)}
    \addlegendentry{DDO(2, HM)}
    \addlegendentry{DDO(4, AM)}
    \addlegendentry{DDO(4, HM)}

    \addplot[no marks, dotted, black, thick]
    table[x=N, y expr={3/x}, col sep=comma] {figures/data/heat_smooth.csv}
    node[right] {$O(h^2)$};

    \addplot[no marks, dotted, black, thick]
    table[x=N, y expr={3/x^2}, col sep=comma] {figures/data/heat_smooth.csv}
    node[right] {$O(h^4)$};

\end{loglogaxis}
\end{tikzpicture}
\end{subfigure}
\caption{$L^\infty$ error for the solution of test case 4 depending on the number of points $N$. The dotted lines depict the corresponding reference convergence rate.}
\label{fig:heat_smooth}
\end{figure}

In this example, we use the analytic solution from test case 1 as $\bar{u}$.
In the results in \cref{fig:heat_smooth} we have omitted the gradient-based reconstructions since the results are generally very similar to the results of test case 2.
We observe that MLS(2, 2) and MLS(2, 4) are second-order accurate while MLS(4, 4) is fourth-order accurate.
Both reconstructions of the derived diffusion operators conserve the order of the underlying discrete Laplace operator.

\paragraph{Test case 5}
\begin{figure}
\centering
\begin{tikzpicture}
\begin{loglogaxis}[
    xlabel=$N$,
    ylabel=$\norm{u-u_h}_\infty$,
    cycle list name=wonglist,
    xmax=5e4,
    ymin=1e-5,
    width=\halfwidth,
    legend style={fill=none, draw=none, font=\footnotesize},
    legend pos=south west,
    legend cell align=left,
]

    \addplot table[x=N, y=mls_2_2, col sep=comma]{figures/data/heat_discontinuous.csv};
    \addplot table[x=N, y=ddo_2_hm, col sep=comma]{figures/data/heat_discontinuous.csv};

    \addlegendentry{MLS(2, 2)}
    \addlegendentry{DDO(2, HM)}

    \addplot[no marks, dotted, black, thick]
    table[x=N, y expr={0.1/sqrt(sqrt(x))}, col sep=comma] {figures/data/heat_discontinuous.csv}
    node[right] {$O(h^{1/2})$};

\end{loglogaxis}
\end{tikzpicture}
\caption{$L^\infty$ error for the solution of test case 5 depending on the number of points $N$. The dotted lines depict the corresponding reference convergence rate.}
\label{fig:heat_discontinuous}
\end{figure}

Similarly to test case 4, we use the analytic solution for the elliptic interface problem from test case 3 as $\bar{u}$.
For this final test case, we have selected harmonic averaging for the derived diffusion operator since it performed the best in test case 3.
Additionally, we added MLS(2, 2) to test if the MLS ansatz is a viable option for time-dependent problems.

The results in \cref{fig:heat_discontinuous} show that for this test case, we cannot reproduce the behavior as seen for test case 3.
While MLS(2, 2) seems to converge, it produces very high errors compared to DDO(2, HM) which only shows $1/2$-order convergence.
It is possible that the first-order convergence would only be visible for coarser point clouds and that for the considered point clouds, we have converged to a final error.
Nevertheless, the derived diffusion operator is superior to the MLS based calculation of the discrete diffusion operator for elliptic and parabolic interface problems.

\section{Final remarks} \label{sec:conclusion}
In this paper, we derived a new discretization method of the diffusion operator based on weighting the discrete Laplace operator by using reconstruction functions.
We proved that the discrete diffusion operator preserves the consistency order from the discrete Laplace operator with additional error terms that arise from the reconstruction.
Furthermore, we presented a possibility of extending the derived operator idea to anisotropic diffusion operators which might be featured in future research.
Also, the study and comparison of different linear solvers and preconditioners in combination with different reconstructions is necessary to be able to solve the occurring linear systems more efficiently.

We tested the derived diffusion operator and showed that even second-order reconstructions preserve the fourth-order accuracy of the discrete Laplace operator for problems with a smooth diffusivity.
For interface problems, we demonstrated the applicability of the new discrete diffusion operator, and numerically showed first-order accuracy for elliptic interface problems.

\section*{Acknowledgments}
Pratik Suchde would like to acknowledge support from the European Union's Horizon 2020
research and innovation program under the Marie Skłodowska-Curie Actions grant agreement No. 892761. Pratik Suchde would like to acknowledge funding from the Institute of Advanced Studies, University of Luxembourg, under the AUDACITY program.

\bibliographystyle{ieeetr}
% \bibliography{bibliography.bib}

\begin{thebibliography}{10}

\bibitem{Monaghan_1992}
J.~J. Monaghan, ``Smoothed particle hydrodynamics,'' {\em Annual Review of Astronomy and Astrophysics}, vol.~30, no.~1, pp.~543--574, 1992.

\bibitem{Kuhnert_1999}
J.~Kuhnert, {\em General Smoothed Particle Hydrodynamics}.
\newblock Aachen, Germany: Shaker Verlag, 1999.

\bibitem{MESHFREE}
{MESHFREE Team}, {\em {MESHFREE} [software]}.
\newblock {Fraunhofer ITWM \& SCAI}, Germany, 2024.

\bibitem{Michel_Seifarth_Kuhnert_Suchde_2021}
I.~Michel, T.~Seifarth, J.~Kuhnert, and P.~Suchde, ``A meshfree generalized finite difference method for solution mining processes,'' {\em Computational Particle Mechanics}, vol.~8, no.~3, pp.~561--574, 2021.

\bibitem{Veltmaat_Mehrens_Endres_Kuhnert_Suchde_2022}
L.~Veltmaat, F.~Mehrens, H.-J. Endres, J.~Kuhnert, and P.~Suchde, ``Mesh-free simulations of injection molding processes,'' {\em Physics of Fluids}, vol.~34, no.~3, p.~033102, 2022.

\bibitem{Swaminathan_Voller_1993}
C.~R. Swaminathan and V.~R. Voller, ``On the enthalpy method,'' {\em International Journal of Numerical Methods for Heat \& Fluid Flow}, vol.~3, no.~3, p.~233–244, 1993.

\bibitem{Saucedo-Zendejo_Resendiz-Flores_2019}
F.~R. Saucedo-Zendejo and E.~O. Reséndiz-Flores, ``Transient heat transfer and solidification modelling in direct-chill casting using a generalized finite differences method,'' {\em Journal of Mining and Metallurgy, Section B: Metallurgy}, vol.~55, no.~1, pp.~47--54, 2019.

\bibitem{Babuska_1970}
I.~Babuška, ``The finite element method for elliptic equations with discontinuous coefficients,'' {\em Computing}, vol.~5, no.~3, pp.~207--213, 1970.

\bibitem{Ewing_Li_Lin_Lin_1999}
R.~E. Ewing, Z.~Li, T.~Lin, and Y.~Lin, ``The immersed finite volume element methods for the elliptic interface problems,'' {\em Mathematics and Computers in Simulation}, vol.~50, no.~1--4, pp.~63--76, 1999.

\bibitem{Fan_Chu_Sarler_Li_2019}
C.-M. Fan, C.-N. Chu, B.~Šarler, and T.-H. Li, ``Numerical solutions of waves-current interactions by generalized finite difference method,'' {\em Engineering Analysis with Boundary Elements}, vol.~100, pp.~150--163, 2019.

\bibitem{Li_Fan_2017}
P.-W. Li and C.-M. Fan, ``Generalized finite difference method for two-dimensional shallow water equations,'' {\em Engineering Analysis with Boundary Elements}, vol.~80, pp.~58--71, 2017.

\bibitem{Suchde_2018}
P.~Suchde, {\em Conservation and Accuracy in Meshfree Generalized Finite Difference Methods}.
\newblock PhD thesis, Technische Universität Kaiserslautern, 2018.

\bibitem{Seibold_2006}
B.~Seibold, {\em M-Matrices in Meshless Finite Difference Methods}.
\newblock PhD thesis, Technische Universität Kaiserslautern, 2006.

\bibitem{Chipot_2009}
M.~Chipot, {\em Elliptic Equations: An Introductory Course}.
\newblock Springer, 2009.

\bibitem{Xing_Song_He_Qiu_2020}
Y.~Xing, L.~Song, X.~He, and C.~Qiu, ``A generalized finite difference method for solving elliptic interface problems,'' {\em Mathematics and Computers in Simulation}, vol.~178, pp.~109--124, 2020.

\bibitem{Ahmad_Islam_Larsson_2020}
M.~Ahmad, S.~ul~Islam, and E.~Larsson, ``Local meshless methods for second order elliptic interface problems with sharp corners,'' {\em Journal of Computational Physics}, vol.~416, no.~109500, p.~109500, 2020.

\bibitem{Davydov_Safarpoor_2021}
O.~Davydov and M.~Safarpoor, ``A meshless finite difference method for elliptic interface problems based on pivoted {QR} decomposition,'' {\em Applied Numerical Mathematics}, vol.~161, pp.~489--509, 2021.

\bibitem{Qin_Song_Liu_2023}
Q.~Qin, L.~Song, and F.~Liu, ``A meshless method based on the generalized finite difference method for three-dimensional elliptic interface problems,'' {\em Computers \& Mathematics with Applications}, vol.~131, pp.~26--34, 2023.

\bibitem{Yoon_Song_2014}
Y.-C. Yoon and J.-H. Song, ``Extended particle difference method for weak and strong discontinuity problems: part {I}. {D}erivation of the extended particle derivative approximation for the representation of weak and strong discontinuities,'' {\em Computational Mechanics}, vol.~53, no.~6, pp.~1087--1103, 2014.

\bibitem{Kim_Liu_Yoon_Belytschko_Lee_2007}
D.~W. Kim, W.~K. Liu, Y.-C. Yoon, T.~Belytschko, and S.-H. Lee, ``Meshfree point collocation method with intrinsic enrichment for interface problems,'' {\em Computational Mechanics}, vol.~40, no.~6, pp.~1037--1052, 2007.

\bibitem{Suchde_Kuhnert_2019}
P.~Suchde and J.~Kuhnert, ``A meshfree generalized finite difference method for surface {PDEs},'' {\em Computers \& Mathematics with Applications}, vol.~78, no.~8, pp.~2789--2805, 2019.

\bibitem{Larsson_Fornberg_2003}
E.~Larsson and B.~Fornberg, ``A numerical study of some radial basis function based solution methods for elliptic {PDEs},'' {\em Computers \& Mathematics with Applications}, vol.~46, no.~5, pp.~891--902, 2003.

\bibitem{Flyer_Fornberg_Bayona_Barnett_2016}
N.~Flyer, B.~Fornberg, V.~Bayona, and G.~A. Barnett, ``On the role of polynomials in {RBF}-{FD} approximations: {I}. {Interpolation} and accuracy,'' {\em Journal of Computational Physics}, vol.~321, pp.~21--38, 2016.

\bibitem{Bayona_Moscoso_Carretero_Kindelan_2010}
V.~Bayona, M.~Moscoso, M.~Carretero, and M.~Kindelan, ``{RBF}-{FD} formulas and convergence properties,'' {\em Journal of Computational Physics}, vol.~229, no.~22, pp.~8281--8295, 2010.

\bibitem{Shankar_2017}
V.~Shankar, ``The overlapped radial basis function-finite difference ({RBF}-{FD}) method: A generalization of {RBF}-{FD},'' {\em Journal of Computational Physics}, vol.~342, pp.~211--228, 2017.

\bibitem{Mishev_1998}
I.~D. Mishev, ``Finite volume methods on {Voronoi} meshes,'' {\em Numerical Methods for Partial Differential Equations}, vol.~14, no.~2, pp.~193--212, 1998.

\bibitem{Milewski_2018}
S.~Milewski, ``Combination of the meshless finite difference approach with the monte carlo random walk technique for solution of elliptic problems,'' {\em Computers \& Mathematics with Applications}, vol.~76, no.~4, pp.~854--876, 2018.

\bibitem{Jacquemin_Tomar_Agathos_Mohseni-Mofidi_Bordas_2020}
T.~Jacquemin, S.~Tomar, K.~Agathos, S.~Mohseni-Mofidi, and S.~P.~A. Bordas, ``Taylor-series expansion based numerical methods: A primer, performance benchmarking and new approaches for problems with non-smooth solutions,'' {\em Archives of Computational Methods in Engineering}, vol.~27, no.~5, pp.~1465--1513, 2020.

\bibitem{Wendland_2004}
H.~Wendland, {\em Scattered Data Approximation}.
\newblock Cambridge Monographs on Applied and Computational Mathematics, Cambridge University Press, 2004.

\bibitem{Zheng_Li_2022}
Z.~Zheng and X.~Li, ``Theoretical analysis of the generalized finite difference method,'' {\em Computers \& Mathematics with Applications}, vol.~120, pp.~1--14, 2022.

\bibitem{Kraus_Kuhnert_Meister_Suchde_2023}
H.~Kraus, J.~Kuhnert, A.~Meister, and P.~Suchde, ``A meshfree point collocation method for elliptic interface problems,'' {\em Applied Mathematical Modelling}, vol.~113, pp.~241--261, 2023.

\bibitem{Collatz_1952}
L.~Collatz, ``Aufgaben monotoner {A}rt,'' {\em Archiv der Mathematik}, vol.~3, no.~5, pp.~366--376, 1952.

\bibitem{Bramble_Hubbard_1962}
J.~H. Bramble and B.~E. Hubbard, ``On the formulation of finite difference analogues of the {Dirichlet} problem for {Poisson’s} equation,'' {\em Numerische Mathematik}, vol.~4, no.~1, pp.~313--327, 1962.

\bibitem{Lorenz_1977}
J.~Lorenz, ``Zur {I}nversmonotonie diskreter {P}robleme,'' {\em Numerische Mathematik}, vol.~27, no.~2, pp.~227--238, 1977.

\bibitem{Ciarlet_1970}
P.~G. Ciarlet, ``Discrete maximum principle for finite-difference operators,'' {\em Aequationes mathematicae}, vol.~4, no.~3, p.~338–352, 1970.

\bibitem{Trask_Perego_Bochev_2017}
N.~Trask, M.~Perego, and P.~Bochev, ``A high-order staggered meshless method for elliptic problems,'' {\em SIAM Journal on Scientific Computing}, vol.~39, no.~2, pp.~A479--A502, 2017.

\bibitem{Seifarth_2018}
T.~Seifarth, {\em Numerische Algorithmen für gitterfreie Methoden zur Lösung von Transportproblemen}.
\newblock PhD thesis, Universität Kassel, 2018.

\bibitem{Kwan-yu_Chiu_Wang_Hu_Jameson_2012}
E.~K.-Y. Chiu, Q.~Wang, R.~Hu, and A.~Jameson, ``A conservative mesh-free scheme and generalized framework for conservation laws,'' {\em SIAM Journal on Scientific Computing}, vol.~34, no.~6, pp.~A2896--A2916, 2012.

\bibitem{Suchde_Jacquemin_Davydov_2023}
P.~Suchde, T.~Jacquemin, and O.~Davydov, ``Point cloud generation for meshfree methods: An overview,'' {\em Archives of Computational Methods in Engineering}, vol.~30, no.~2, pp.~889--915, 2023.

\bibitem{Friedrich_1998}
O.~Friedrich, ``Weighted essentially non-oscillatory schemes for the interpolation of mean values on unstructured grids,'' {\em Journal of Computational Physics}, vol.~144, no.~1, pp.~194--212, 1998.

\end{thebibliography}

\end{document}